\def\R{\mathbb R}
\def\N{\mathbb N}
\def\trait (#1) (#2) (#3){\vrule width #1pt height #2pt depth #3pt}
\def\fin{\hfill\trait (0.1) (5) (0) \trait (5) (0.1) (0) \kern-5pt
\trait (5) (5) (-4.9) \trait (0.1) (5) (0)}
\numberwithin{equation}{section}
\newcommand{\be}{\begin{equation}}
\newcommand{\ee}{\end{equation}}
\newcommand{\baa}{\begin{array}}
\newcommand{\eaa}{\end{array}}
\newcommand{\ba}{\begin{eqnarray}}
\newcommand{\ea}{\end{eqnarray}}
\newcommand{\ban}{\begin{eqnarray*}}
\newcommand{\ean}{\end{eqnarray*}}
\newcommand{\dis}{\displaystyle}
\newtheorem{theorem}{\bf Theorem}[section]
\newtheorem{lemma}[theorem]{\bf Lemma}
\newtheorem{proposition}[theorem]{\bf Proposition}
\theoremstyle{definition}\newtheorem{remark}[theorem]{\bf Remark}
\newcommand{\ds}{\,\mathrm{d}s}
\def\apg{\left\{}
\def\chg{\right\}}
\def\apt{\left(}
\def\cht{\right)}
\def\ch{\right.}
\def\a{a}
\renewcommand{\H}{\mathcal{H}}
\newcommand{\psg}[2]{\big\langle #1,#2\big\rangle}
\renewcommand{\leq}{\leqslant}
\renewcommand{\H}{\mathcal{H}}
\def\ds{\rightarrow}
\newcommand{\vsx}{\mathbb{X}}
\newcommand{\vsz}{\mathbb{Z}}
\newcommand{\vcr}{\mathbb{V}}
\newcommand{\co}{C}
\newcommand{\cor}{\mathcal{C}}
\newcommand{\V}{S}
\newcommand{\Vr}{\Sigma}
\newcommand{\Sm}{\mathcal{M}}
\newcommand{\xo}{x^0}
\newcommand{\xf}{x^f}
\newcommand{\too}{t^0}
\newcommand{\tf}{t^f}
\newcommand{\To}{T_0}
\newcommand{\Tu}{T_1}
\newcommand{\Zo}{Z_0}
\newcommand{\Zu}{Z_1}
\newcommand{\Zob}{\bar{Z}_0}
\newcommand{\Zub}{\bar{Z}_1}
\newcommand{\rou}{\rho_1}
\newcommand{\rod}{\rho_2}
\newcommand{\roh}{\rho_\H}
\newcommand{\wu}{w_1}
\newcommand{\wdu}{w_2}
\newcommand{\wh}{w_\H}
\newcommand{\vu}{v_1}
\newcommand{\vd}{v_2}
\newcommand{\vh}{v_\H}
\newcommand{\xud}{\chi}
\newcommand{\xof}{\lambda}
\newcommand{\xofo}{\xof^{0}}
\newcommand{\xoff}{\xof^f}
\newcommand{\vapz}{\mathbb{P}_\vsz}
\newcommand{\vayu}{P_{Y_1}}
\newcommand{\vayd}{P_{Y_2}}
\newcommand{\vayh}{P_{Y_\H}}
\newcommand{\varu}{P_{\rou}}
\newcommand{\vard}{P_{\rod}}
\newcommand{\varh}{P_{\roh}}
\newcommand{\vaxu}{P_{1}}
\newcommand{\vaxd}{P_{2}}
\newcommand{\vaxh}{P_{\H}}
\newcommand{\pt}{\partial} 
\newcommand{\ps}{\nabla}
\newcommand{\Hrs}{\widetilde{\mathcal{H}}}
\newcommand{\Hr}{{\mathcal{H}}}
\newcommand{\Hus}{\widetilde{\textit{H}_1}}
\newcommand{\Hu}{{\textit{H}_1}}
\newcommand{\Hds}{\widetilde{\textit{H}_2}}
\newcommand{\Hd}{{\textit{H}_2}}
\newcommand{\Hhs}{\widetilde{\textit{H}_\H}}
\newcommand{\Hh}{{\textit{H}_\H}}
\newcommand{\hysh}{{\rm (H $1\H2$)}}
\newcommand{\psgH}[2]{\big\langle #1,#2\big\rangle_{\H}}
\date{}
\title{Value function for regional control problems via dynamic programming and Pontryagin maximum principle}
\author{G. Barles, A. Briani
\thanks{Laboratoire de Math\'ematiques et Physique Th\'eorique (UMR CNRS 7350), F\'ed\'eration Denis Poisson (FR CNRS 2964), Universit\'e Fran\c{c}ois Rabelais, Parc de Grandmont, 37200 Tours, France (\texttt{Guy.Barles@lmpt.univ-tours.fr}, \texttt{ariela.briani@lmpt.univ-tours.fr})
\newline \indent This work was partially supported by the ANR HJnet ANR-12-BS01-0008-01},  
E. Tr\'elat\thanks{Sorbonne Universit\'es, UPMC Univ Paris 06, CNRS UMR 7598, Laboratoire Jacques-Louis Lions, F-75005, Paris, France (\texttt{emmanuel.trelat@upmc.fr}).}}
\begin{document}
\maketitle

\begin{abstract}
In this paper we focus on regional deterministic optimal control problems, i.e., problems where the dynamics and the cost functional may be different in several regions of the state space and  present discontinuities at their interface. 

Under the assumption that optimal trajectories have a locally finite number of switchings (no Zeno phenomenon), we use the duplication technique to show that the value function of the regional optimal control problem is the minimum over all possible structures of trajectories of value functions associated with classical optimal control problems settled over fixed structures, each of them being the restriction to some submanifold of the value function of a classical optimal control problem in higher dimension.
The lifting duplication technique is thus seen as a kind of desingularization of the value function of the regional optimal control problem. In turn, we extend to regional optimal control problems the classical sensitivity relations and we prove that the regularity of this value function is the same (i.e., is not more degenerate) than the one of the higher-dimensional classical optimal control problem that lifts the problem.
\end{abstract}

\noindent {\bf Keywords}: Regional optimal control, discontinuous dynamics, Pontryagin maximum principle, Hamilton-Jacobi-Bellman equation
\\
{\bf AMS Class. No}:
49L20,   % Dynamic programming method
49K15,   % Optimality condition of problems involving ordinary differential equations
35F21. %Hamilton-Jacobi equations  %-------------------------------------------------------

%%%%%%%%%%%%%%%%%%%%%%%%%%%%%%%%%%%%%%%%%%%%

\section{Introduction} 
In this article, we consider regional optimal control problems in finite dimension, the word ``regional" meaning that the dynamics and the cost functional may depend on the region
of the state space and therefore present discontinuities at the interface between these different regions. Our objective is to provide a description of these trajectories exploiting the Pontryagin maximum principle and the Dynamic Programming approach (the value function is the viscosity solution of the corresponding Hamilton-Jacobi equation). We establish a relationship between these two approaches, which is new for regional control problems.

There is a wide existing literature on regional optimal control problems, which have been  studied with different approaches and within various related contexts: \textit{stratified optimal control problems} in \cite{BC,BrYu,HZ}, \textit{optimal multiprocesses} in \cite{ClarkeVinter1,ClarkeVinter2}, they also enter into the wider class of \textit{hybrid optimal control} (see \cite{BBM,RIK,SC}).
Necessary optimality conditions have been developed in \cite{GP,HT,Sussmann} in the form of a Pontryagin maximum principle. For regional optimal control problems, the main feature is the jump of the adjoint vector at the interface between two regions (see \cite{HT}).
An alternative approach is the Bellman one, developed in \cite{BBC,BBC2,RZ} in terms of an appropriate Hamilton-Jacobi equation studied whose solutions are studied in the viscosity sense (see also \cite{IMZ,Ou,RSZ} for transmission conditions at the interface).

In this paper we exploit both the Dynamic Programming approach and Pontryagin maximum principle in order to describe the optimal trajectories of regional control problems. Although the techniques are not new we believe that the approach is interesting and helpful.
We are going to use in an instrumental way the lifting duplication technique, nicely used in \cite{Dmitruk} in order to prove that the hybrid version of the Pontryagin maximum principle can be derived from the classical version (i.e., for classical, non-hybrid problems) under the assumption that optimal trajectories are regular enough. More precisely, we assume that optimal trajectories have a locally finite number of switchings, or, in other words, we assume that wild oscillation phenomena (known as Fuller, Robbins or Zeno phenomena in the existing literature, see \cite{CGPT} for a survey) do not occur, or at least, if they happen then we deliberately ignore the corresponding wildy oscillating optimal trajectories and we restrict our search of optimal trajectories to those that have a regular enough structure, i.e., a locally finite number of switchings. Under this assumption, the duplication technique developed in \cite{Dmitruk} can be carried out and shows that the regional optimal control problem can be lifted to a higher-dimensional optimal control problem that is ``classical", i.e., non-regional. As we are going to see, this construction has a number of nice applications.

\medskip

In order to point out the main ideas, we consider the following simplified framework with only two different regions.
Let $N\in\N^*$. We assume that
\begin{equation*}
\begin{split}
%\hyp{\Omega}{}\qquad
& \R^N=\Omega_1\cup\Omega_2\cup\H,\qquad \Omega_1,\Omega_2\ \textrm{open}, \quad \Omega_1\cap\Omega_2=\emptyset,\\
& \H=\partial \Omega_1=\partial\Omega_2 \ \textrm{is a $C^1$-submanifold of $\R^N$,}
\end{split}
\end{equation*}
and we consider a nonlinear optimal control problem in $\R^N$, stratified according to the above partition. We write this regional optimal control problem as
\begin{equation}\label{regionalOCP}
\begin{split}
& \dot \vsx(t) = f(\vsx(t),\a(t)), \\
& \vsx(t^0)=x^0,\quad \vsx(t^f)=x^f, \\
& \inf \int_{t^0}^{t^f} \ell(\vsx(t),\a(t))\, dt,
\end{split}
\end{equation}
where the dynamics $f$ and the running cost $\ell$ are defined as follows.
If $x\in\Omega_i$ for $i=1$ or $2$ then
$$
f(x,\a) = f_i(x,\a),\qquad \ell(x,\a) = l_i(x,\a),
$$
where $f_i:\R^N\times\R^m\rightarrow\R^N$ and $l_i:\R^N\times\R^m\rightarrow\R$ are $C^1$-mappings. If $x\in\H$ then
$$
f(x,\a) = f_\H(x,\a),\qquad \ell(x,\a) = l_\H(x,\a),
$$
where $f_\H:\R^N\times\R^m\rightarrow\R^N$ and $\ell_\H:\R^N\times\R^m\rightarrow\R$ are $C^1$-mappings. 
The set $\H$ is called the {\it interface} between the two open regions $\Omega_1$ and $\Omega_2$ (see Figures \ref{fig_1} and  \ref{fig_2}).

The class of controls that we consider also depends on the region. As long as $\vsx(t)\in\Omega_i$, we assume that $\a \in L^\infty ((t^0,t^f),A_i)$, where $A_i$ is a measurable subset of $\R^m$. Accordingly, as long as $\vsx(t)\in\H$, we assume that $\a \in L^\infty ((t^0,t^f),A_\H)$,  $A_\H$ is a measurable subset of $\R^m$.

The terminal times $t^0$ and $t^f$ and the terminal points $x^0$ and $x^f$ may be fixed or free according to the problem under consideration.
For instance, if we fix $x^0, t^0, x^f, t^f$, we define the value function 
$$
\V(x^0,t^0,x^f,t^f)
$$
of the regional optimal control problem \eqref{regionalOCP} as being the infimum of the cost functional over all possible admissible trajectories steering the control system from $(x^0,t^0)$ to $(x^f,t^f)$.

Our objective is to show that the value function $\V$ of the regional optimal control problem \eqref{regionalOCP} can be recovered from the study of a {\it classical} (i.e., non-hybrid) optimal control problem settled in high dimension, under the assumption of finiteness of switchings. To this aim, we list all possible structures of optimal trajectories of \eqref{regionalOCP}.
We recall that, for regional optimal control problems, existence of an optimal control and Cauchy uniqueness results are derived using Filippov-like arguments, allowing one to tackle the discontinuities of the dynamics and of the cost functional (see, e.g., \cite{BC,BrYu,HZ}). 

In what follows, we assume that the regional optimal control problem under consideration admits at least one optimal solution. We consider such an optimal trajectory $\vsx(\cdot)$ associated with a control $\a(\cdot)$ on $[t^0,t^f]$. Assuming that $x^0\in\Omega_1$ and $\xf \in\Omega_2$, we  consider various structures. 

\medskip

The simplest case is when the trajectory $\vsx(\cdot)$ consists of two arcs, denoted by $([t^0,t^1],X_1(\cdot))$ and $([t^1,t^f],X_2(\cdot))$, lying respectively in $\Omega_1$ for the first part, and then in  $\Omega_2$ for the second part of the trajectory, with $X_1(t^1)=X_2(t^1)\in\H$. Such optimal trajectories are studied in \cite{HT} under the assumption of a transversal crossing and an explicit jump condition is given for the adjoint vector obtained by applying the Pontryagin maximum principle. This is the simplest possible trajectory structure, and we denote it by 1-2 (see Figure \ref{fig_1}). It has only one switching.

\begin{figure}[h]
\begin{center}
\resizebox{8cm}{!}{\input 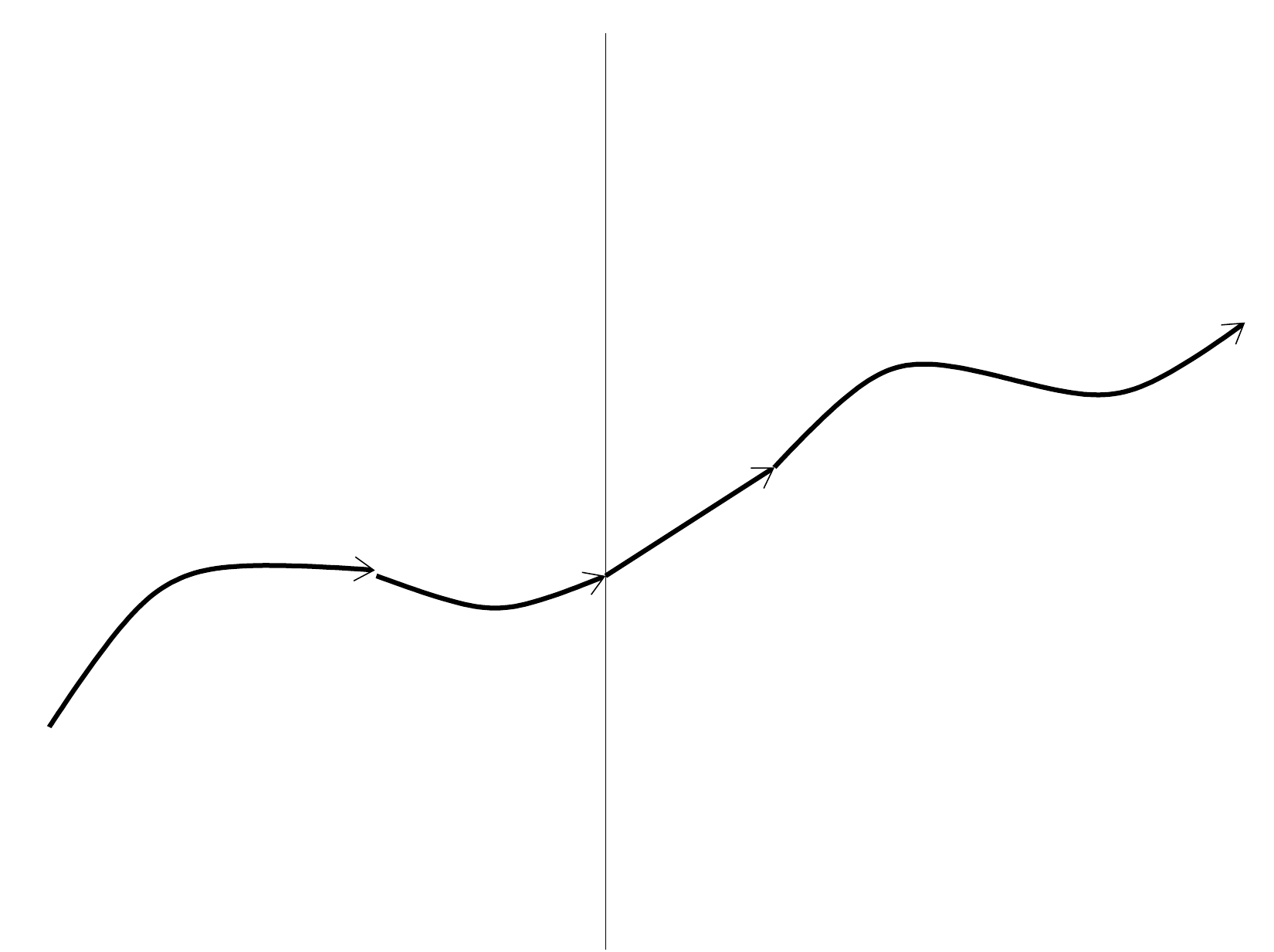_t}
\end{center}
\caption{Structure 1-2.}\label{fig_1}
\end{figure}

The second structure is when the trajectory $\vsx(\cdot)$ consists of three arcs, denoted by $([t^0,t^1],X_1(\cdot))$, $([t^1,t^2],X_\H(\cdot))$ and $([t^2,t^f],X_2(\cdot))$, lying respectively in $\Omega_1$ for the first arc, in $\H$ for the second arc and in $\Omega_2$ for the third arc. The middle arc $X_\H$ lies along the interface. Such a structure is denoted by 1-$\H$-2 (see Figure \ref{fig_2}). The trajectory has two switchings.

\begin{figure}[h]
\begin{center}
{ \resizebox{8cm}{!}{\input 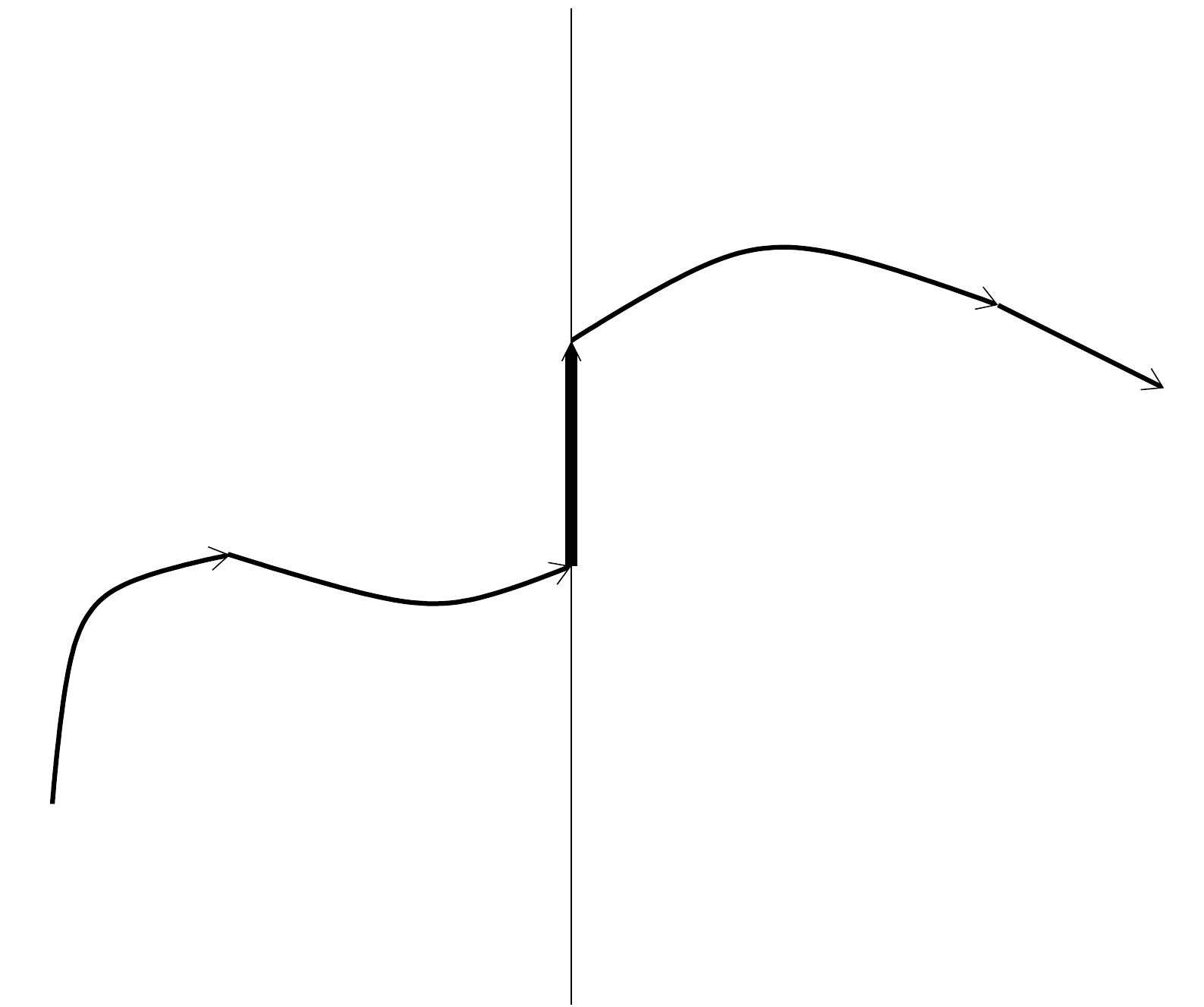_t} }
%\includegraphics[height=4cm]{disegno1H2.pdf}
%{\resizebox{9cm}
%{\input disegno1H2}
\end{center}
\caption{Structure 1-$\H$-2.}\label{fig_2}
\end{figure}

Accordingly, we consider all possible structures 1-2-$\H$-1, 1-$\H$-1-2, 1-2-$\H$-2, etc, made of a finite number of successive arcs. Restricting ourselves to any such fixed structure, we can define a specific optimal control problem consisting of finding an optimal trajectory steering the system from the initial point to the desired target point and minimizing the cost functional over all admissible trajectories having exactly such a structure. Denoting by $\V_{12}$, $\V_{1\H 2}$, etc, the corresponding value functions, we have
$$
\V=\inf \{ \V_{12}, \V_{1\H 2},\ldots \} ,
$$
provided all optimal trajectories of the regional optimal control problem have a locally finite number of switchings (and thus, the infimum above runs over a finite number of possibilites).

Using the duplication argument of \cite{Dmitruk}, we show that each of the above value functions (restricted to some fixed structure) can be written as the projection / restriction of the value function of a \emph{classical} optimal control problem in higher dimension (say $p$, which is equal to the double of the number of switchings of the corresponding structure), the projection being considered along some coordinates, and the restriction being done to some submanifolds of the higher dimensional space $\R^p$. The word ``duplication" reflects the fact that each arc of the trajectory gives two components of the dynamics of the problem in higher dimension. 

Thanks to this technique, we characterize the value function as a viscosity solution of an Hamilton-Jacobi equation and we apply the classical Pontryagin maximum principle. We thus provide an explicit relationship between the gradient of the value function of the regional control problem evaluated along the optimal trajectory and the adjoint vector. This sensitivity relation extends to the framework of regional optimal control problems the relation in the classical framework.
This allows us to derive conditions at the interface: continuity of the Hamiltonian and jump condition for the adjoint vector.

In Section \ref{sec2} we provide the details of the procedure for the structures 1-2 and 1-$\H$-2. The procedure goes similarly for other structures and consists of designing a duplicated problem of dimension two times the number of arcs of the structure.

The value function $\V$ is then the infimum of value functions associated with all possible structures, provided optimal trajectories have a locally finite number of switchings. The latter assumption is required to apply the duplication technique.
However in general it may happen that the structure of switchings have a complex structure, even fractal, and thus the set of switching points may be countably or even uncountably infinite. In the context of hybrid optimal control problems, the \emph{Zeno phenomenon} is a well known chattering phenomenon, meaning that the control switches an infinite number of times over a compact interval of times. It is analyzed for instance in \cite{JELS,ZJLS}, and necessary and/or sufficient conditions for the occurrence of the Zeno phenomenon are provided in \cite{AAS,HLF}. However, we are not aware of any existing result providing sufficient conditions for hybrid optimal control problems under which the number of switchings of optimal trajectories is locally finite or even only countable. Anyway, although the Zeno phenomenon may occur in general, restricting the search of optimal strategies to trajectories having only a locally finite number of switchings is a reasonable assumption in practice in particular in view of numerical implementation (see \cite{CGPT,TJOTA}).

Under this local finiteness assumption, it follows from our analysis that the regularity of the value function $\V$ of the regional optimal control problem is the same (i.e., is not more degenerate) than the one of the higher-dimensional classical optimal control problem lifting the problem. More precisely, we prove that each value function $\V_{12}$, $\V_{1\H 2}$, $\ldots$, for each fixed structure, is the restriction to a submanifold of the value function of a classical optimal control problem in higher dimension. Our main result, Theorem \ref{mainthm}, gives a precise representation of the value function and of the corresponding sensitivity relations, in relation with the adjoint vector coming from the Pontryagin maximum principle.
In particular, if for instance all classical value functions above are Lipschitz then the value function of the regional optimal control problem is Lipschitz as well.
This regularity result is new in the framework of hybrid or regional optimal control problems.

\medskip

The paper is organized as follows. 

In Section \ref{sec2} we define the regional optimal control problem and we state the complete set of assumptions that we consider throughout. We analyze in detail the structures 1-2 and 1-$\H$-2 (the other cases being similar), by providing an explicit construction of the duplicated problem. As a result, we obtain the above-mentioned representation of the value function of the regional optimal control problem and the consequences for its regularity. 

In Section \ref{sec_example} we provide a simple regional optimal control problem, having a structure 1-$\H$-2, modelling for instance the motion of a pedestrian walking in $\Omega_1$ and $\Omega_2$ and having the possibility of taking a tramway along $\H$ at any point of this interface $\H$. 

Section \ref{sec_proofs} gathers the proofs of all results stated in Section \ref{sec2}.

%%%%%%%%%%%%%%%%%%%%%%%%%%%%%%%%%%%%%%%%%%%%%%%%%%%%%%
\section{Value function for regional optimal control problems}\label{sec2}
\subsection{Problem and main assumptions} \label{ass}

We assume that: 
\begin{itemize} 
    \item[(H$\H$)] \textit{$\R^N=\Omega_1\cup\Omega_2\cup\H$ with
	  $\Omega_1\cap\Omega_2=\emptyset$ and $\H=\partial
	  \Omega_1=\partial\Omega_2$ being a $C^{1}$-submanifold. \\
	  More precisely, there exists a function $\Psi: \R^N \ds \R$ of class $C^1$ such that 
$\H= \{ x \in \R^N \: | \: \Psi(x)=0\}$} with  $\nabla \Psi \neq 0$ on $\H$. 
\end{itemize}

We consider  the problem of minimizing  the cost of trajectories going from $\xo$ to $\xf$ 
in time $\tf - \too$. These trajectories follows the respective dynamics $f_i, f_\H$ when they are respectively in $\Omega_i, \H$, and pay  
different  costs  
$l_i$, $l_\H$ on $\H,  \Omega_i$ ($i=1,2$).

The tangent bundle of $\H$ is $T\H=\bigcup_{z\in\H}\Big(\{z\}\times T_z\H\Big)$, 
where $T_z\H$ is the tangent space to $\H$ at $z$ (which is
isomorphic to $\R^{N-1}$). For $\phi\in C^1(\H)$ and
$x\in\H$, we denote by $\nabla_\H \phi(x)$ the gradient of $\phi$ at
$x$, which belongs to $T_{x}\H$. 
 The scalar product in $T_z\H$ is denoted by
$\psgH{u}{v}$.  This definition makes sense if both vectors $u,v$  belong to
$T_z\H$ and without ambiguity we will use the same notation when one of the vectors $u,v$ is in $\R^N$. The notation $\psg{u}{v}$  refers  to the usual Euclidean scalar product in $\R^N$. 

We make the following assumptions: 
\begin{itemize} 

%\item[(H$A$)] \textit{ The sets $A_1$, $A_2$ and $A_\H$ are   measurable subsets of $\R^m$.}
 
\item[(Hg)] \textit{Let ${\cal M}$ be a submanifold of $\R^N$ and $A$ a measurable subsets of $\R^m$,  the function $g : {\cal M}\times A \to \R^N$ is a continuous bounded function, $C^1$ and with  Lipschitz continuous derivative with respect to the first variable. 
	More precisely,  there
	exists  $M>0$ such that for any $x \in {\cal M}$
	and $\alpha \in A$,
	$$ |g(x,\alpha) | \leq M . $$ 
	Moreover, there exist $L , L^1>0$ such that for
	any $z, z'\in  {\cal M}$
	 and $\alpha \in A$,
    $$|g(z,\alpha)-g(z',\alpha)|\leq   L \: |z-z'|    , $$
$$
   \Big|\frac{\partial }{ \partial z_j }g(z,\alpha)-\frac{\partial }{ \partial z_j } g(z',\alpha) \Big|\leq   L^1 |z-z'|  \:  , \:  j=1, \dots , N  .
$$
}
\item[(Hf$l_i$)] \textit{Let $A_i$ (i=1,2) be  measurable subsets of $\R^m$. We assume  that $f_i:\Omega_i\times A_i \to \R^N$, $l_i: \Omega_i\times A_i  \to \R$, ($i=1,2$) satisfy Assumption (Hg) for a suitable choice of positive constants $M,L$ and $L^1$. }

\item[(Hf$l_\H$)] \textit{Let $A_\H$  be  measurable subsets of $\R^m$. We assume  that $(x,  f_\H(x,a_\H))  : \H \times A_\H \to T\H$ and $ l_\H : \H \times A_\H \to \R$ 
 satisfy Assumption  (Hg) for a suitable choice of positive constants $M,L$ and $L^1$. }

%\item[(H$\H$)] 

  \end{itemize}

In this paper we consider optimal trajectories that are decomposed on arcs staying only in $\Omega_1$,  $\Omega_2$ or $\H$ 
and  touch the boundary of  $\Omega_1$, or $\Omega_2$ only at initial or final time.

\paragraph{The problem in the region $\Omega_i$ (for $i=1$ or $2$).}
 The trajectories  $X_i : \R^+ \ds \R^N$  are solutions of
 \begin{equation} \label{stato1}
 \dot{X}_i(t)= f_i (X_i(t), \alpha_i(t))  ,  \quad  \: X_i(t) \in \Omega_i \: \:   \:  \forall t \in  (\too, \tf)
\end{equation} 
 \begin{equation} \label{stato1CIF}
   \: X_i(\too)=\xo \:  , \:  \: X_i(\tf)=\xf \:  \mbox{ with  }  \xo \neq \xf \in \overline{\Omega}_i .
\end{equation} 
The value function $\V_{i}:  \overline{\Omega}_i \times \R^+ \times \overline{\Omega}_i \times \R^+ \ds \R $ is 
\begin{equation*} %\label{valore1}
\V_{i}(\xo,\too;\xf,\tf)= \inf \Big\{    \int_{\too}^{\tf} l_i(X_i(t),\alpha_i(t))  \: dt \: : \:   X_i \mbox{ is solution of }  \eqref{stato1}- \eqref{stato1CIF}
\:  \Big\} . 
\end{equation*}
We define  the Hamiltonian 
 $\Hus : \Omega_i \times \R^N \times \R \times  A_i \ds \R$ by
\begin{equation*}
 \Hus(X_i,Q_i,p^0,\alpha_i)=\psg{Q_i}{f_i(X_i,\alpha_i)} + p^0 \: l_i(X_i, \alpha_i)  , \end{equation*}
 and $\Hu : \Omega_i \times \R^N  \times \R \ds \R$ by
\begin{equation*}
  \Hu(X_i,Q_i,p^0)=\sup_{\alpha_i \in A_i} 
 \Hus(X_i,Q_i,p^0,\alpha_i) \:.
\end{equation*}

\paragraph{The problem along the interface $\H$.}
The trajectories  $X_\H :  \R^+ \ds \H$  are solutions of
 \begin{equation} \label{statoh}
 \dot{X}_\H(t)= f_\H (X_\H(t), a_\H(t))  ,  \quad  \: X_\H(t) \in \H \: \:   \:  \forall t \in  (\too, \tf)
\end{equation} 
 \begin{equation} \label{statohCIF}
   \: X_\H(\too)=\xo \:  , \:  \: X_\H(\tf)=\xf \:  \mbox{ with  }  \xo \neq \xf \in \H .
\end{equation} 
The value function $\V_{\H}:  \H \times \R^+ \times \H \times \R^+ \ds \R $ is 
\begin{equation*} %\label{valoreH}
\V_{\H}(\xo,\too;\xf,\tf)= \inf \Big\{    \int_{\too}^{\tf} l_\H(X_\H(t),a_\H(t))  \: dt \: : \:   X_\H \mbox{ is solution of }  \eqref{statoh}- \eqref{statohCIF}
\:  \Big\} . 
\end{equation*}
We define  the Hamiltonian  $\Hhs :  T\H \times \R \times A_\H  \ds \R$ by
\begin{equation*}
 \Hhs(X_\H,Q_\H,p^0,a_\H)=\psgH{Q_\H}{f_\H(X_\H,a_\H)} + p^0 \: l_\H(X_\H, \a_\H)   \:  , 
  \end{equation*}
and $\Hh : T\H  \times \R \ds \R$ by
\begin{equation*}
 \Hh(X_\H,Q_\H,p^0)=\sup_{a_\H \in A_\H}    \Hhs(X_\H,Q_\H,p^0,a_\H)\:.
 \end{equation*}

%%%%%%%%%%%%%%%%%%%%%%%%%%%%%%%%%%%%%%%%%%%%%%%
\subsection{Analysis of the structure 1-2} \label{sec12}

We describe here  the simplest possible structure: trajectories  consisting of two arcs  living successively in $\Omega_1$, $\Omega_2$ and crossing the interface $\H$ at a given time (see Figure \ref{fig_1}). 
This case has already been studied in the literature. As explained in \cite{ClarkeVinter1}, the jump condition \eqref{conHT12} herefter is a rather straightforward generalization of the problem solved by Snell's Law. Besides, the Pontryagin maximum principle is also well established in this case; we  recall it  hereafter  in detail because it is  interesting to compare this result with the one  obtained for more general structures (see Theorem \ref{mainthm} and Remark  \ref{remmain}).

We make the following \emph{transversal crossing} assumption:

\begin{itemize}
\item[(H 1-2)] There exist a time $t_c \in (\too,\tf)$ and  an optimal trajectory  that starts from $\Omega_1$, stays in $\Omega_1$ in the interval $[\too,t_c)$, does not arrive tangentially  at time $t_c$ on $\H$ and stays in  $\Omega_2$ on the interval $(t_c,\tf]$.
\end{itemize}

Such trajectories are described as follows: 
for each  initial  and final data $(\xo,\too,\xf,\tf)\in  \Omega_1 \times \R^+ \times \Omega_2 \times \R^+_*$,  the trajectory is given by   the vector
$\vsx(t)=(X_1(t),  X_2(t)) : \R^+ \ds  \R^{N}  \times \R^N$
Lipschitz solution of  the system 
\begin{equation} \label{SysHyb12} 
\left\{ \begin{array}{rll}
\dot{X}_1(t) &= f_1 (X_1(t), \alpha_1(t))   & t \in (\too,t_c) \\
%\dot{X}_\H(t) &= f_\H (X_\H(t)), \a(t)) & t \in (t_1,t_2) \\
\dot{X}_2(t) & =f_2(X_2(t),\alpha_2(t)) & t \in (t_c,\tf)
\end{array}  \ch
\end{equation}
completed with the  mixed   conditions 
\begin{equation} \label{CondMixte12}
X_1(t_0)=\xo , \: \: X_1(t_c) =X_2(t_c),\: \: X_2(\tf)=\xf  , 
\end{equation}
the non tangential conditions 
\begin{equation} \label{Tangcond12} 
 \psg{\nabla \Psi(X_1(t_c^-))}{f_1(X_1(t_c^-), \alpha_1(t_c^-))} \neq 0 \quad \psg{ \nabla \Psi(X_2(t_c^+))}{f_2(X_2(t_c^+),\alpha_2(t_c^+))} \neq 0 ,
\end{equation}
 and the state constraints 
\begin{equation} \label{SCHstay12}
 X_1(t) \in \Omega_1  \:  \: \forall  t \in (\too,t_c)   ,   \qquad  
  X_2(t) \in \Omega_2  \:  \: \forall  t \in (t_c,\tf).
\end{equation}
The cost of such a trajectory is
\begin{equation*} %\label{Cost}
\dis \co(\xo,\too;\xf,\tf; \vsx)=\int_{\too}^{t_c} l_1(X_1(t),\alpha_1(t))  \: dt  +\int_{t_c}^{\tf} l_2(X_2(t),\alpha_2(t))   \: dt  .
\end{equation*}
Hence the  value function $\V_{1,2} :  \R^N \times \R^+ \times \R^N \times \R^+  \ds \R$ is given by
\begin{multline*} %\label{valore12}
\V_{1, 2}(\xo,\too;\xf,\tf)= \inf \Big\{  \co(\xo,\too;\xf,\tf; \vsx)    \: : \:   \vsx \mbox{ is solution of    \eqref{SysHyb12}-\eqref{CondMixte12}- \eqref{Tangcond12} -\eqref{SCHstay12}, }  \\
 \tf > t_c > \too  \: \:  \Big\}.
\end{multline*}
Under the assumptions (H$\H$),  (Hfl$_i$), (Hfl$_\H$), the results of \cite{GP,HT} apply and 
 for any  $(\xo,\too;\xf,\tf)  \in  \Omega_1 \times \R^+ \times \Omega_2 \times \R^+$ we have 
\begin{equation*} 
\V_{1,2}(\xo,\too;\xf,\tf)= 
\min_{ } \apg  \V_{1}(\xo,\too; x_c, t_c) + \V_{2}(x_c,t_c; \xf,\tf)  \: :  \too < t_c < \tf  , \:  x_c \in \H \chg ,
\end{equation*}
where we recall that $\V_{i}$ is the value function of the problem restricted to the region $\Omega_i$. Moreover, if $\vsx(\cdot)$ is an optimal trajectory for the value  function $\V_{1,2}(\xo,\too;\xf,\tf)$  and $P(\cdot)$ is   the corresponding adjoint vector given by the Pontryagin maximum principle,  then   we have the  continuity condition
\begin{equation*} %\label{ContTu12}
\Hu(X_1(t_c^-),P(t_c^-),p^0)=\Hd(X_2(t_c^+),P(t_c ^+),p^0).
\end{equation*}
and  the  {\it jump condition}  on the adjoint vectors 
\begin{equation} \label{conHT12}
P_2(t_c^+) - P_1(t_c^-)  = \frac{\psg{P_1(t_c^-)}{f_1(t_c^-)-f_2(t_c^+)}+
p^0(l_1(t_c^-)-l_2(t_c^+) )}{\psg{\nabla \Psi(X_2(t_c^+))}{f_2(t_c^+)}} 
\nabla \Psi(X_1(t_c^-))
\end{equation}
where, above, the short notation $f_i(t_c^\pm)$ stands for $f_i(X_i(t_c^\pm),t_c,\alpha_i(t_c^\pm))$ and 
$l_i(t_c^\pm)$ stands for $l_i(x_i(t_c^\pm),t_c,\alpha_i(t_c^\pm))$, ($i=1,2$).

\subsection{Analysis of the structure 1-$\H$-2} \label{sec1H2}
In this section we analyze the  structure with three arcs described in Figure \ref{fig_2}. Precisely, 
given $(\xo,\too,\xf,\tf)\in  \Omega_1 \times \R^+ \times \Omega_2 \times \R^+$  with $\xo \neq \xf$ we make the following assumption:

\begin{itemize}
\item[\hysh] There exist $\too < t_1 < t_2 < \tf$ and  an optimal trajectory  that starts from $\Omega_1$, stays in $\Omega_1$ in the interval $[\too,t_1)$, 
    stays on $\H$ on a   time interval  $[t_1,t_2]$ and  stays in $\Omega_2$ in the interval $(t_2,\tf]$.
\end{itemize}

Such trajectories are described as follows: 
for each  initial  and final data $(\xo,\too,\xf,\tf)\in  \Omega_1 \times \R^+ \times \Omega_2 \times \R_*^+$,  the trajectory will be given by   the vector
$\vsx(t)=(X_1(t), X_\H(t), X_2(t)) : \R^+ \ds  \R^{N} \times \H \times \R^N$
Lipschitz solution of  the  system 
\begin{equation} \label{SysHyb} 
\left\{ \begin{array}{rll}
\dot{X}_1(t) &= f_1 (X_1(t), \alpha_1(t))   & t \in (\too,t_1) \\
\dot{X}_\H(t) &= f_\H (X_\H(t), \a_\H(t)) & t \in (t_1,t_2) \\
\dot{X}_2(t) & =f_2(X_2(t),\alpha_2(t)) & t \in (t_2,\tf)
\end{array}  \ch
\end{equation}
with mixed conditions
\begin{equation} \label{CondMixte}
X_1(t_0)=\xo , \: \: X_1(t_1) \in \H ,\: \:X_1(t_1)=X_\H(t_1) ,\: \:X_\H(t_2) \in \H , \: \:X_2(t_2)=X_\H(t_2) ,\: \: X_2(\tf)=\xf  \: 
\end{equation}
and the state constraints 
\begin{equation} \label{SCHstay}
 X_1(t) \in \Omega_1  \:  \: \forall  t \in (\too,t_1)   , \quad  X_\H \in \H \:  \: \forall t \in (t_1,t_2) ,  \quad    
  X_2(t) \in \Omega_2  \:  \: \forall  t \in (t_2,\tf).
\end{equation}
The cost of such a trajectory is
\begin{multline*}% \label{Cost}
\dis \co(\xo,\too;\xf,\tf; \vsx)=\int_{\too}^{t_1} l_1(X_1(t),\alpha_1(t))  \: dt  + \int_{t_1}^{t_2} l_\H(X_\H(t),\a_\H(t))  \:  dt \\
+\int_{t_2}^{\tf} l_2(X_2(t),\alpha_2(t))   \: dt  .
\end{multline*}
Our aim is  to characterize the  value function $\V_{1,\H,2} :  \Omega_1 \times \R^+ \times \Omega_2 \times \R^+  \ds \R$
\begin{multline} \label{valoreUHD}
\V_{1,\H,2}(\xo,\too;\xf,\tf)= \inf \Big\{  \co(\xo,\too;\xf,\tf; \vsx)    \: : \:   \vsx \mbox{ is solution of \eqref{SysHyb}-\eqref{CondMixte}-\eqref{SCHstay}, } \\
 \tf > t_2 > t_1 > \too  \: 
\:  \Big\}.
\end{multline}

\begin{remark} 
This definition does not include the cases where $\xo \in \H$ and/or $\xf \in \H$. However, it can be  modified in order to involve only  vectors $X_1$, $X_2$ or $X_\H$. Moreover, note that  if both $\xo, \xf  \in \H$  then  $\V_{1,\H,2}(\xo,\too;\xf,\tf)=\V_\H(\xo,\too;\xf,\tf)$. 
\end{remark}

Herafter, we use the following notations.

\paragraph{Notations.}
Let   $u=u(\xo,\too;\xf,\tf):  \big(\Omega_1 \times \R^+  \times \Omega_2 \times \R_*^+ \big)    \rightarrow \R$ be a generic function. 
We   denote by $\nabla _{\xo} u$, $\nabla _{\xf} u$ the gradients with respect to the first and the second state variable respectively,  so $\nabla _{\xo} u$ and $\nabla _{\xf} u$ take values in $\R^N$. 
We denote by $u_{\too}$ and $u_{\tf}$ the partial derivatives with respect to the first and the second time variable   respectively, so  $u_{\too}$ and $u_{\tf}$ take values in $\R$.

 If $(\xo,\too;\xf,\tf) \in \Omega_1 \times \R^+ \times \H \times \R^+_*$ we  define $\nabla^\H_{\xf} u$  such that 
$(\xf, \nabla^\H_{\xf} u) \in T\H$. 

If $(\xo,\too;\xf,\tf) \in \H \times \R^+ \times \Omega_2 \times \R^+_*$  we  define $\nabla^\H_{\xo} u$  such that 
$(\xo, \nabla^\H_{\xo} u) \in T\H$.

\paragraph{Definition of the duplicated problem.}
The main ingredient of our analysis is the construction of the duplicated problem (following  \cite{Dmitruk}),
the advantage being that the latter  will be a  classical (nonregional)  problem in  higher dimension.  The idea is to change the time variable to let the possible optimal trajectories evolve ``at the same time" on the three arcs: the one on $\Omega_1$, the one on $\H$ and the one on $\Omega_2$. 
In this duplicated  optimal control problem we will not need to  impose  the mixed conditions  
\eqref{CondMixte}  and the state constraints  \eqref{SCHstay}. 
Therefore  we will be able to characterize the  value function by an Hamilton-Jacobi equation, apply the usual Pontryagin maximum principle and exploit the classical link (sensitivity relations) between them. 

We set $V=(A_1 \times [0,T]) \times  (A_\H \times [0,T]) \times (A_2 \times [0,T])$, for $T>0$ large enough. For fixed $\To,\Tu \in \R^+$ the {\it admissible  controls} are $ \vcr(\tau)=(\vu(\tau),\wu(\tau),\vh(\tau),\wh(\tau),\vd(\tau),\wdu(\tau)) \in  L^\infty([\To,\Tu] ; V)$. \\
The {\it admissible trajectories} are Lipschitz continuous vector functions 
$$
 \vsz(\tau)=(Y_1(\tau),\rou(\tau),Y_\H(\tau),\roh(\tau), Y_2(\tau),\rod(\tau))  : (\To, \Tu) \ds \Omega_1 \times \R_*^+ \times \H \times \R_*^+ \times \Omega_2 \times \R_*^+
 $$
 solutions of  the so-called \emph{duplicated system}
 \begin{equation} \label{SysHybRip} 
\left\{ \begin{array}{rll}
{Y}^\prime_1(\tau) &= f_1 (Y_1(\tau), \vu(\tau)) \wu(\tau)   & \tau \in (\To,\Tu) \\
\rou^\prime(\tau) & = \wu(\tau) & \tau \in (\To,\Tu)  \\
{Y}^\prime_\H(\tau) &= f_\H (Y_\H(\tau), \vh(\tau)) \wh(\tau)   & \tau \in  (\To,\Tu)  \\
\roh^\prime(\tau) & = \wh(\tau) & \tau \in  (\To,\Tu) \\
{Y}^\prime_2(\tau) &= f_2 (Y_2(\tau), \vd(\tau)) \wdu(\tau)   & \tau \in  (\To,\Tu)  \\
\rod^\prime(\tau) & = \wdu(\tau) & \tau \in (\To,\Tu)  \\
\end{array}  \ch
\end{equation}
with initial and final conditions 
\begin{equation} \label{CondIniFin} 
\vsz(\To)=\Zo  \:  \mbox{ , } \:  \vsz(\Tu)=\Zu   .
\end{equation}
Note that to take into account the mixed conditions on the original problem, we will allow
 initial and final  state $\Zo$, $\Zu$  in $\overline{\Omega}_1 \times \R_*^+ \times \H \times \R_*^+ \times \overline{\Omega}_2 \times \R_*^+$. \\
 More precisely, given  $(\Zo,\To)$, $(\Zu,\Tu)$ $\in (\overline{\Omega}_1 \times \R_*^+ \times \H \times \R_*^+ \times \overline{\Omega}_2 \times \R_*^+) \times \R^+$ we  consider the subset of admissible trajectories
$$
{\cal  Z}_{(\Zo,\To),(\Zu,\Tu)}= \Big\{ \vsz \in Lip((\To, \Tu); \Omega_1 \times \R_*^+ \times \H \times \R_*^+ \times \Omega_2 \times \R_*^+) \: : \:  \mbox{  there exists an admissible  control }
$$
$$
\quad \quad \quad \quad\quad \quad     \vcr \in L^\infty([\To,\Tu] ; V)  \mbox{ such that } 
\vsz \mbox{ is a solution of \eqref{SysHybRip}-\eqref{CondIniFin}}  \Big \} . 
$$
For each admissible trajectory $\vsz$ we  consider the cost functional
\begin{equation*} %\label{CostRip}
\dis \cor(\vsz)=\int_{\To}^{\Tu} \Big(  l_1(Y_1(\tau),\vu(\tau))\wu(\tau)   +l_\H(Y_\H(\tau),\vh(\tau))\wh(\tau)  
+ l_2(Y_2(\tau),\vd(\tau))\wdu(\tau) \Big) \: d\tau \: 
 \end{equation*}
and hence the  value function  
  $\Vr :  \big(\overline{\Omega}_1 \times \R_*^+ \times \H \times \R_*^+ \times \overline{\Omega}_2 \times \R_*^+   \times \R^+ \big)^2 \ds \R $ is  defined  by  
\begin{equation} \label{valore}
\Vr(\Zo,\To;\Zu,\Tu)= \inf \Big\{  \cor(\vsz)    \: : \:   \vsz  \in    {\cal  Z}_{(\Zo,\To),(\Zu,\Tu)}   \Big\} .
\end{equation}

\paragraph{Link between the regional optimal control problem and the duplicated problem.}
To establish  the   link between the original and the  duplicated problem,  given   $(\xo,\too;\xf,\tf) \in \Omega_1 \times \R^+_* \times \Omega_2 \times \R^+_*$, we define the  submanifold of $\R^{6(N+1)}$
\begin{multline*}
\Sm(\xo,\too;\xf,\tf) = \Big\{  (\Zo,\Zu) \in  \R^{6(N+1)} \: :  \: \Zo=(\xo,\too,x_1,t_1,x_2,t_2) ,   \\
 \Zu=(x_1,t_1,x_2,t_2,\xf,\tf)  \,  
\mbox{ with } \:  x_1 \in \H ,  x_2 \in \H, \: \mbox{ and }  \tf > t_2 >  t_1 > \too \Big\}.
\end{multline*}
The following result says that  the original value function is  the minimum of the value functions  $ \Vr(\Zo,\To;\Zu,\Tu)$ 
restricted to the submanifold  $\Sm(\xo,\too;\xf,\tf)$.

\begin{proposition} \label{infvalues}
Under the assumptions (H$\H$),  (Hfl$_i$) and (Hfl$_\H$), given   $(\xo,\too;\xf,\tf) \in \Omega_1 \times \R^+ \times \Omega_2 \times \R^+$,  we have 
\begin{equation} \label{tesilem}
\V_{1,\H,2}(\xo,\too;\xf,\tf)= \min  \Big\{  \Vr(\Zo,\To;\Zu,\Tu)    \: : \:   (\Zo,\Zu) \in \Sm(\xo,\too;\xf,\tf), \ 0 \leq \To < \Tu \Big\} .
\end{equation}
\end{proposition}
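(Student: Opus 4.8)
The proof amounts to establishing the two inequalities between $\V_{1,\H,2}(\xo,\too;\xf,\tf)$ and the right-hand side of \eqref{tesilem}, and in both directions the underlying idea is identical: the duplicated dynamics \eqref{SysHybRip} is exactly the original dynamics read in reparametrized time, the three physical times being recorded by $\rou,\roh,\rod$ whose derivatives are the speeds $\wu,\wh,\wdu$. Thus an admissible trajectory of one problem is transported into an admissible trajectory of the other with the \emph{same} cost, via the change of variable $t=\rou(\tau)$ on the first arc, $t=\roh(\tau)$ on the second and $t=\rod(\tau)$ on the third. The submanifold $\Sm(\xo,\too;\xf,\tf)$ has been designed precisely so that the junction conditions $X_1(t_1)=X_\H(t_1)\in\H$ and $X_\H(t_2)=X_2(t_2)\in\H$ of \eqref{CondMixte} become the matching of the final data of one block with the initial data of the next (the common points $x_1,t_1$ and $x_2,t_2$), while the state constraints \eqref{SCHstay} are encoded in the target spaces $\Omega_1,\H,\Omega_2$ of the components $Y_1,Y_\H,Y_2$.

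For the inequality ``$\geq$'' I would start from an arbitrary admissible trajectory $(X_1,X_\H,X_2)$ of \eqref{SysHyb}--\eqref{SCHstay}, with switching times $\too<t_1<t_2<\tf$ and controls $\alpha_1,\a_\H,\alpha_2$. Fixing $\To=0$, $\Tu=1$, I would reparametrize each arc affinely onto $[0,1]$, setting for instance $Y_1(\tau)=X_1(\too+\tau(t_1-\too))$, $\rou(\tau)=\too+\tau(t_1-\too)$, $\wu(\tau)\equiv t_1-\too$ and $\vu(\tau)=\alpha_1(\too+\tau(t_1-\too))$, and analogously for the $\H$- and $2$-blocks on $[t_1,t_2]$ and $[t_2,\tf]$. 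A direct computation shows that the resulting $\vsz$ solves \eqref{SysHybRip}, that the speeds are constant hence admissible (take $T\geq\tf-\too$), and that its initial and final data are $\Zo=(\xo,\too,x_1,t_1,x_2,t_2)$ and $\Zu=(x_1,t_1,x_2,t_2,\xf,\tf)$ with $x_1=X_1(t_1)=X_\H(t_1)\in\H$ and $x_2=X_\H(t_2)=X_2(t_2)\in\H$, so that $(\Zo,\Zu)\in\Sm(\xo,\too;\xf,\tf)$. The change of variables gives $\cor(\vsz)=\co(\xo,\too;\xf,\tf;\vsx)$, whence $\Vr(\Zo,\To;\Zu,\Tu)\le\co(\xo,\too;\xf,\tf;\vsx)$; taking the infimum over regional trajectories yields ``$\geq$''.

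The inequality ``$\leq$'' is the delicate one. Here I would fix $(\Zo,\Zu)\in\Sm$ and $\vsz\in{\cal Z}_{(\Zo,\To),(\Zu,\Tu)}$, and recover a regional trajectory by inverting the time changes. Since $\wu\geq0$, the map $\rou$ is only nondecreasing from $\too$ to $t_1$ and need not be invertible. The key observation removing this obstacle is that on any interval where $\wu=0$ one has $Y_1'=f_1(Y_1,\vu)\wu=0$, so $Y_1$ is constant there; hence $X_1(t):=Y_1(\tau)$ for any $\tau$ with $\rou(\tau)=t$ is well defined on $[\too,t_1]$, is Lipschitz with constant $\le M$ (because $|X_1(t')-X_1(t)|=|\int_\tau^{\tau'}f_1(Y_1,\vu)\wu\,ds|\le M(\rou(\tau')-\rou(\tau))=M(t'-t)$), solves $\dot X_1=f_1(X_1,\alpha_1)$ with $\alpha_1:=\vu\circ\sigma_1$ for a measurable right inverse $\sigma_1$ of $\rou$, and satisfies $X_1(\too)=\xo$, $X_1(t_1)=x_1\in\H$, $X_1(t)\in\Omega_1$ on $(\too,t_1)$. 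Proceeding likewise on the two other blocks and using the matching built into $\Sm$ to glue the arcs continuously at $t_1,t_2$, one obtains an admissible trajectory for \eqref{SysHyb}--\eqref{SCHstay}. The change of variable $t=\rou(\tau)$ (valid for the monotone, possibly non-strictly increasing $\rou$, the contributions of $\{\wu=0\}$ vanishing) gives $\int_{\To}^{\Tu} l_1(Y_1,\vu)\wu\,d\tau=\int_{\too}^{t_1}l_1(X_1,\alpha_1)\,dt$, and similarly for the other terms, so $\co(\xo,\too;\xf,\tf;\vsx)=\cor(\vsz)$. Taking infima yields $\V_{1,\H,2}\le\Vr(\Zo,\To;\Zu,\Tu)$ for every admissible $(\Zo,\Zu)\in\Sm$, hence ``$\leq$''. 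That the outer infimum is a genuine \emph{minimum} follows by applying the ``$\geq$'' construction to an optimal regional trajectory (which exists by assumption): it produces a pair $(\Zo,\Zu)\in\Sm$ with $\Vr(\Zo,\To;\Zu,\Tu)=\V_{1,\H,2}(\xo,\too;\xf,\tf)$.

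The main obstacle is exactly the non-invertibility of the time changes in the direction ``$\leq$'': one must treat rigorously the sets where $\wu,\wh,\wdu$ vanish and show that collapsing them yields a genuine Lipschitz regional trajectory with an admissible measurable control and unchanged cost. The remaining verifications—admissibility of the constructed controls, membership of the junction points in $\H$, and the bookkeeping of $\too<t_1<t_2<\tf$—are routine once the correspondence is in place.
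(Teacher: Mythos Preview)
Your proposal is correct and follows the same reparametrization idea as the paper: each arc of the regional trajectory is put in bijection with a block of the duplicated system via the time changes $t=\rho_i(\tau)$, and the costs match by change of variable. The paper's own proof is in fact terser---for the converse direction it simply asserts that ``the time change of variable $\rho$ is invertible''---whereas you carefully treat the sets $\{\wu=0\}$, $\{\wh=0\}$, $\{\wdu=0\}$ and show that collapsing them still yields a well-defined Lipschitz trajectory with unchanged cost; this extra care is warranted, since the controls $w_i$ in the duplicated problem are only required to lie in $[0,T]$.
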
 

Proposition \ref{infvalues} is proved in Section \ref{proof_infvalues}.

%\begin{remark} 
%Note that in the case of $\xo=x_1\in \H$ , $\too=t_1$ and or $\xf=x_2 \in \H$, $\tf=t_2$ the duplicated problem will naturally take in account only 
%the duplication of the piece of trajectory $\vsx$ that exists. Therefore the  dimension of the vector $\vsz$ can vary.    
%\end{remark}

\paragraph{Application of the usual Pontryagin maximum principle  to the duplicated problem.}
Let us introduce several further notations.

In order to write the partial derivatives of $\Vr$ at points $\vsz  \in \Omega_1 \times \R_*^+ \times \H \times \R_*^+ \times \Omega_2 \times \R_*^+$
  we enumerate the  space variables  as follows: $(\vsz_0,\vsz_1)=\big( (1,2,3,4,5,6) , (7,8,9,10,11,12) \big) $ therefore $\pt_i \Vr $ takes values in 
$\R$ for $i=2,4,6,8,10,12$;  $\ps_i \Vr $ takes values in $\R^N$ for $i=1,5,7,11$ and  $ \nabla_{\H,i} \Vr $ in $T \H$ for $i=3,9$.  
We set 
\begin{equation*} 
\nabla_{\vsz_0}  = ( \ps_1 \Vr ,\pt_2 \Vr,\ps_{\H,3} \Vr ,\pt_4 \Vr ,\ps_5 \Vr ,\pt_6 \Vr)  , \quad
\Vr_{t_0}(\vsz_0,t_0,\vsz_1, t_1) = - \frac{\partial }{ \partial t_0 } \Vr(\vsz_0,t_0,\vsz_1, t_1),
\end{equation*}
\begin{equation*}
 \nabla_{\vsz_1}  \Vr= ( \ps_7 \Vr ,\pt_8 \Vr,\ps_{\H,9} \Vr ,\pt_{10} \Vr ,\ps_{11} \Vr ,\pt_{12} \Vr) ,  \quad 
 \Vr_{t_1}(\vsz_0,t_0,\vsz_1, t_1) = - \frac{\partial }{ \partial t_1 } \Vr(\vsz_0,t_0,\vsz_1, t_1).
\end{equation*}
Moreover, we  respectively denote by $D^{+}_{\vsz_0} \Vr $ and  $D^{-}_{\vsz_0} \Vr $ (or $D^{+}_{\vsz_1} \Vr $ and  $D^{-}_{\vsz_1} \Vr $) the classical super- and sub-differential  in  the space variables $1,3,5,7,9$. 

Given $\vcr=(\vu,\wu,\vh,\wh,\vd,\wdu) \in V$,  $\vsz=(Y_1,\rho_1,Y_\H,\rho_\H,Y_2,\rho_2) \in 
\R^N \times \R \times T\H \times  \R \times \R^N  \times \R $   
and  $\mathbb{Q}=(Q_1,Q_2,Q_3,Q_4,Q_5,Q_6) \in \R^N \times \R \times T_{Y_\H}\H \times  \R \times \R^N  \times \R$, 
we define the Hamiltonian
\begin{multline*}
\Hrs(\vsz,\mathbb{Q},p^0,\vcr)= \psg{Q_1}{f_1(Y_1,\vu)\wu} + Q_2  \: \wu +  p^0  \: l_1(Y_1,\vu)\wu + \psgH{Q_3}{f_\H(Y_\H,\vh)\wh} \\
+ Q_4  \: \wh +  p^0  \: l_\H(Y_\H,\vh)\wh 
+\psg{Q_5}{f_2(Y_2,\vd)\wdu} + Q_6  \: \wdu +  p^0  \: l_2(Y_2,\vd)\wdu  
\end{multline*}
and we set
$$
\Hr(\vsz,\mathbb{Q},p^0)= \sup_{ \vcr \in V}  \Hrs(\vsz,\mathbb{Q},p^0,\vcr) .
$$

The application of the usual Pontryagin maximum principle  to the duplicated optimal control problem leads to the following lemma.

 \begin{lemma}  \label{PMPripa}
Under the assumptions (H$\H$),  (Hfl$_i$) and  (Hfl$_\H$), let $(\Zo,\To)$, $(\Zu,\Tu)$ $\in  (\Omega_1 \times \R_*^+ \times \H \times \R_*^+ \times \Omega_2 \times \R_*^+ )\times \R^+$
and let $\vsz(\cdot) \in {\cal  Z}_{(\Zo,\To),(\Zu,\Tu)}$ be an optimal trajectory for the value function $\Vr(\Zo,\To;\Zu,\Tu)$
 defined in \eqref{valore}. Assume that  $\vcr(\cdot)$ is the corresponding optimal  control.  \\
 There exist $p^0 \leq 0$ and a piecewise absolutely continuous mapping 
$$
\dis \vapz(\cdot) =(\vayu(\cdot), \varu(\cdot), \vayh(\cdot), \varh(\cdot), \vayd(\cdot), \vard(\cdot)) : \R^+ \ds \R^N \times \R \times T_{Y_\H}\H \times  \R \times \R^N  \times \R
$$ 
(adjoint vector) with $(\vapz(\cdot),p^0) \neq (0,0)$,
such that the {\it extremal }  lift $(\vsz(\cdot),\vapz(\cdot),p^0,\vcr(\cdot))$ is solution of 
 \begin{equation*} 
\vsz^\prime(\tau)= \frac{\partial \Hrs}{\partial P} \: (\vsz(\tau),\vapz(\tau),p^0,\vcr(\tau))    , \quad  
\vapz^\prime(\tau)= -\frac{\partial \Hrs}{\partial Z} \: (\vsz(\tau),\vapz(\tau),p^0,\vcr(\tau))  
\end{equation*}  
for almost every $\tau \in (T_0,T_1)$.
Moreover,  the  maximization condition 
\begin{equation} \label{condmax}
\Hrs(\vsz(\tau),\vapz(\tau),p^0,\vcr(\tau))=\max_{\vcr \in V} \:   \Hrs(\vsz(\tau),\vapz(\tau),p^0,\vcr) \: \:( = \Hr(\vsz(\tau),\vapz(\tau),p^0 ) \: )
\end{equation}
holds for almost every $\tau \in (T_0,T_1)$.  \\
If  $\Zo=(\xo,\too,x_1,t_1,x_2,t_2)$, $\Zu=(x_1,t_1,x_2,t_2,\xf,\tf)$ $\in\Sm(\xo,\too;\xf,\tf)$ then the following 
 transversality condition holds:  there  exist $\nu_1, \nu_2 \in \R$ such that 
\begin{eqnarray}  
\label{Jumpt1}
\varh(T_0)&=&\varu(T_1) \\
\label{Jumpt2}
\vard(T_0)&=& \varh(T_1) \\
\label{Jumps1} 
\vayh(T_0)&=&\vayu(T_1)+\nu_1 \:  \nabla \Psi(x_1) \\
\label{Jumps2} 
\vayd(T_0)&=& \vayh(T_1)+ \nu_2  \:  \nabla \Psi(x_2) . 
\end{eqnarray}
\end{lemma}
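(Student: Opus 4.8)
The plan is to recognize that the duplicated problem \eqref{SysHybRip}--\eqref{valore} is a \emph{classical} (nonregional) optimal control problem and to derive Lemma \ref{PMPripa} as a direct application of the usual Pontryagin maximum principle, the only nonstandard feature being that the block $Y_\H$ of the state evolves on the submanifold $\H$. First I would record that, under (H$\H$), (Hfl$_i$) and (Hfl$_\H$), the right-hand side of \eqref{SysHybRip} and the integrand of $\cor$ are bounded, of class $C^1$ in $\vsz$ with Lipschitz continuous derivatives, and measurable in the control, while the control set $V$ is fixed and independent of $\tau$; moreover (Hfl$_\H$) guarantees $(Y_\H,f_\H(Y_\H,\vh))\in T\H$, so that the pair $(Y_\H,\roh)$ genuinely evolves on $\H\times\R$. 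Hence the duplicated problem is a well-posed smooth control problem on the product manifold $\Omega_1\times\R_*^+\times\H\times\R_*^+\times\Omega_2\times\R_*^+$.

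Applying the Pontryagin maximum principle (in its version on manifolds) to the optimal pair $(\vsz(\cdot),\vcr(\cdot))$ with fixed endpoints $\vsz(\To)=\Zo$, $\vsz(\Tu)=\Zu$ then yields at once a nontrivial multiplier $(\vapz,p^0)$ with $p^0\leq 0$, the Hamiltonian system governing $(\vsz,\vapz)$ associated with $\Hrs$, and the maximization condition \eqref{condmax}. The component $\vayh$ of the adjoint naturally lives in the cotangent space $T^*_{Y_\H}\H$, identified with $T_{Y_\H}\H$ through $\psgH{\cdot}{\cdot}$, which is exactly the target space announced in the statement. These assertions require nothing beyond citing the classical result.

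The only substantive step is the transversality condition obtained when $(\Zo,\Zu)\in\Sm(\xo,\too;\xf,\tf)$. By Proposition \ref{infvalues} the relevant $(\Zo,\Zu)$ realizes the minimum of $\Vr$ over $\Sm$, so the endpoints must be regarded as free within $\Sm$, and the classical transversality condition asserts that $(\vapz(\To),-\vapz(\Tu))$ annihilates $T_{(\Zo,\Zu)}\Sm$, that is,
\[
\psg{\vapz(\To)}{\delta\Zo}-\psg{\vapz(\Tu)}{\delta\Zu}=0
\]
for every $(\delta\Zo,\delta\Zu)\in T_{(\Zo,\Zu)}\Sm$, the pairing on the $Y_\H$-entries being understood as $\psgH{\cdot}{\cdot}$. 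I would then compute this tangent space explicitly: for fixed $(\xo,\too,\xf,\tf)$ the manifold $\Sm$ is parametrized by $(x_1,t_1,x_2,t_2)$ with $x_1,x_2\in\H$, and since each of these four quantities occurs simultaneously in $\Zo$ and in $\Zu$, a generic tangent vector has the coupled form $\delta\Zo=(0,0,\delta x_1,\delta t_1,\delta x_2,\delta t_2)$, $\delta\Zu=(\delta x_1,\delta t_1,\delta x_2,\delta t_2,0,0)$ with $\delta x_1\in T_{x_1}\H$, $\delta x_2\in T_{x_2}\H$ and $\delta t_1,\delta t_2\in\R$. Collecting the coefficients of the four independent variations then gives the stated relations: $\delta t_1$ and $\delta t_2$ produce \eqref{Jumpt1}--\eqref{Jumpt2}, while the tangential variations force $\vayh(\To)-\vayu(\Tu)$ and $\vayd(\To)-\vayh(\Tu)$ to be orthogonal to $T_{x_1}\H$ and $T_{x_2}\H$; since under (H$\H$) the normal line to $\H$ at a point $x$ is spanned by $\nabla\Psi(x)$, this yields \eqref{Jumps1}--\eqref{Jumps2} with scalar multipliers $\nu_1,\nu_2$.

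The main obstacle is organizational rather than conceptual: one must keep careful track of the enumeration of the twelve components and of the fact that $x_1,t_1,x_2,t_2$ are shared between $\Zo$ and $\Zu$, so that their variations couple the initial and the final adjoint. The single genuinely geometric point is the mismatch of spaces at the interface---$\vayu$ and $\vayd$ are full vectors of $\R^N$ whereas $\vayh$ is tangential to $\H$---which is precisely what makes the normal directions $\nabla\Psi(x_1)$ and $\nabla\Psi(x_2)$ appear in the jump relations, rather than producing plain continuity of the adjoint.
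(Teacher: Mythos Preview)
Your proposal is correct and follows essentially the same route as the paper: apply the classical Pontryagin maximum principle to the duplicated problem, then invoke the standard transversality condition $(-\vapz(\To),\vapz(\Tu))\perp T_{(\Zo,\Zu)}\Sm(\xo,\too;\xf,\tf)$ and read off \eqref{Jumpt1}--\eqref{Jumps2} from the explicit description of the tangent space to $\Sm$. Your write-up is somewhat more detailed than the paper's---in particular you make explicit the role of Proposition~\ref{infvalues} in turning the fixed-endpoint problem into one with endpoints free in $\Sm$, and you spell out why the normal component $\nabla\Psi$ appears---but the argument is the same.
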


We provide a proof of Lemma \ref{PMPripa} in Section \ref{proof_PMPripa}.

\paragraph{Sensitivity relations.}
In order to establish  the  link  between the adjoint vector and the gradient of  the value function $\Vr$, we assume the uniqueness of the extremal lift: 
\begin{itemize}
\item[(Hu)]  We assume that the optimal trajectory $\vsz(\cdot)$  in Lemma \ref{PMPripa} admits a unique   extremal  lift $(\vsz(\cdot),\vapz(\cdot),p^0,\vcr(\cdot))$ which is moreover normal, i.e., $p^0=-1$. 
\end{itemize} 

The assumption of uniqueness of the solution of the optimal control problem and of uniqueness of its extremal lift (which is then moreover normal) is closely related to the differentiability properties of the value function. We refer to \cite{AubinFrankowska,ClarkeVinter} for precise results on differentiability properties of the value function and to \cite{CannarsaSinestrari,RT1,RT2,Stefani} for results on the size of the set where the value function is differentiable. For instance for control-affine systems the singular set of the value function has Hausdorff $(N-1)$-measure zero, whenever there is no optimal singular trajectory (see \cite{RT2}), and is a stratified submanifold of $\R^N$ of positive codimension in an analytic context (see \cite{T2006}). These results essentially say that, if the dynamics and cost function are $C^1$, then the value function is of class $C^1$ at ``generic" points.
Moreover, note that the property of having a unique extremal lift, that is moreover normal, is generic in the sense of the Whitney topology for control-affine systems (see \cite{Chitour_Jean_TrelatJDG,Chitour_Jean_Trelat} for precise statements).

We have the following result.

\begin{proposition} \label{TEOLINKBASE} \label{proof_TEOLINKBASE}
Assume (H$\H$), (Hfl$_i$) and  (Hfl$_\H$). 
 Let $(\Zo,\To)$, $(\Zu,\Tu) \in \big(\overline{\Omega}_1 \times \R_*^+ \times \H \times \R_*^+ \times \overline{\Omega}_2 \times \R_*^+ \big)  \times \R^+$  and let $\vsz(\cdot) \in {\cal  Z}_{(\Zo,\To),(\Zu,\Tu)}$ be an optimal trajectory for the value function $\Vr(\Zo,\To;\Zu,\Tu)$
 defined in \eqref{valore}. Let  $\vapz$ be  the corresponding  absolutely continuous adjoint vector given by Theorem  \ref{PMPripa}.  Then:
 \begin{itemize}
 \item[(i)] For any time $\tau$ in the closed interval   $[\To,\Tu]$ we have
 \begin{equation}  \label{linkbase0}
 D^{-}_{\Zo}  \Vr(\vsz(\tau),\tau;\Zu,\Tu) \subseteq - \vapz (\tau)\subseteq D^+_{\Zo}  \Vr (\vsz (\tau),\tau; \Zu,\Tu) \quad \quad
  \end{equation} 
in the sense that either   $D^{-}_{\Zo}  \Vr(\vsz(\tau),\tau;\Zu,\Tu)$ is empty or the function $\tau \mapsto  \Vr(\vsz(\tau),\tau;\Zu,\Tu) $   
 is differentiable and then $ D^{-}_{\Zo}   \Vr = D^+_{\Zo}  \Vr $ at  this point. \\
Moreover, when assumption (Hu) holds  the function $\tau \mapsto   \Vr(\vsz(\tau),\tau;\Zu,\Tu) $ 
is differentiable for every time in $[\To,\Tu]$, thus
 \begin{equation} \label{linkbaseT0}
 \nabla_{\Zo}  \Vr(\vsz(\tau),\tau;\Zu,\Tu) = -\vapz (\tau) \quad   \: \quad  \forall \tau \in   [\To,\Tu]  . 
 \end{equation} 
\item[(ii)] For any time $\tau$ in the closed interval   $[\To,\Tu]$ we have
 \begin{equation}  \label{linkbase1}
 D^{-}_{\Zu}  \Vr(\Zo,\To;\vsz(\tau),\tau) \subseteq  \vapz (\tau)\subseteq D^+_{\Zu}  \Vr (\Zo,\To;\vsz(\tau),\tau) \quad \quad
  \end{equation} 
in the sense that either   $D^{-}_{\Zu}  \Vr(\Zo,\To;\vsz(\tau),\tau)$ is empty or the function $\tau \mapsto   \Vr(\Zo,\To;\vsz(\tau),\tau)$   
 is differentiable and then $ D^{-}_{\Zu}   \Vr = D^+_{\Zu}  \Vr $ at  this point. \\
Moreover, when assumption (Hu) holds  the function $\tau \mapsto   \Vr(\Zo,\To;\vsz(\tau),\tau)$ 
is differentiable for every time in  $[\To,\Tu]$, thus
 \begin{equation} \label{linkbaseT1}
 \nabla_{\Zu}  \Vr(\Zo,\To;\vsz(\tau),\tau) = \vapz (\tau) \quad   \: \quad  \forall \tau \in   [\To,\Tu]  . 
 \end{equation} 
\end{itemize} 
\end{proposition}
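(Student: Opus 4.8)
The plan is to observe that the duplicated problem defining $\Vr$ is a \emph{classical} (nonregional) optimal control problem on the product manifold $\overline{\Omega}_1\times\R\times\H\times\R\times\overline{\Omega}_2\times\R$, so that Proposition \ref{TEOLINKBASE} is nothing but the classical sensitivity relation linking the adjoint vector of the Pontryagin maximum principle to the generalized differentials of the value function (see \cite{AubinFrankowska,ClarkeVinter,CannarsaSinestrari}). Assumptions (Hg), (Hfl$_i$) and (Hfl$_\H$) guarantee the boundedness, the Lipschitz continuity and the $C^1$ regularity of the duplicated dynamics \eqref{SysHybRip} and of the running cost of $\cor$ that are needed to apply this theory; the only genuinely new feature is that the middle block of state variables is constrained to the submanifold $\H$, so that the corresponding value-function gradients and adjoint components are $T\H$-valued. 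I would prove (i) and (ii) by one and the same argument, the sign change merely reflecting whether $\vsz(\tau)$ plays the role of source or of target for the optimal cost-to-go.

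For part (i), I first invoke Bellman's principle of optimality for the duplicated problem: along the optimal pair $(\vsz(\cdot),\vcr(\cdot))$ one has, for every $\tau\in[\To,\Tu]$,
\be
\Vr(\Zo,\To;\Zu,\Tu)=\int_{\To}^{\tau} L\big(\vsz(s),\vcr(s)\big)\,ds+\Vr(\vsz(\tau),\tau;\Zu,\Tu),
\ee
where $L$ denotes the integrand of $\cor$; in particular the restriction of $(\vsz,\vcr)$ to $[\tau,\Tu]$ is optimal for the subproblem issued from $(\vsz(\tau),\tau)$. The heart of the matter is the superdifferential inclusion $-\vapz(\tau)\in D^+_{\Zo}\Vr(\vsz(\tau),\tau;\Zu,\Tu)$. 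I would obtain it by a first-order perturbation of the initial state of this subproblem: perturbing $\vsz(\tau)$ into $\vsz(\tau)+h$ and comparing with a suitably corrected admissible trajectory reaching $\Zu$ at time $\Tu$, the linearized variation $\delta\vsz(\cdot)$ solves the variational equation associated with \eqref{SysHybRip}, along which the pairing $\psg{\vapz(s)}{\delta\vsz(s)}$ is constant because $\vapz$ solves the adjoint system of Lemma \ref{PMPripa}. Evaluating this conservation together with the maximization condition \eqref{condmax} yields the upper estimate
\be
\Vr(\vsz(\tau)+h,\tau;\Zu,\Tu)\leq \Vr(\vsz(\tau),\tau;\Zu,\Tu)-\psg{\vapz(\tau)}{h}+o(|h|),
\ee
which is exactly $-\vapz(\tau)\in D^+_{\Zo}\Vr$.

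The subdifferential inclusion then comes for free: a function cannot possess both a nonempty sub- and a nonempty superdifferential at a point without being differentiable there (the two sets then reducing to the single gradient), so the membership $-\vapz(\tau)\in D^+_{\Zo}\Vr$ forces every element of $D^-_{\Zo}\Vr$ to equal $-\vapz(\tau)$, which is \eqref{linkbase0}. To upgrade this to the everywhere differentiability \eqref{linkbaseT0}, I would use the classical equivalence, for value functions of the present type, between differentiability at a point and uniqueness of the normal minimizing extremal passing through it (see \cite{CannarsaSinestrari,AubinFrankowska}); assumption (Hu) provides exactly this uniqueness, hence $\tau\mapsto\Vr(\vsz(\tau),\tau;\Zu,\Tu)$ is differentiable for every $\tau\in[\To,\Tu]$ and $\nabla_{\Zo}\Vr=-\vapz(\tau)$.

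The main obstacle is the manifold constraint on the middle block. On the $\H$-slot the admissible perturbations $h$ must lie in $T_{Y_\H(\tau)}\H$, the gradient of $\Vr$ is $\nabla_\H$-valued and the adjoint component $\vayh(\tau)$ lives in $T_{Y_\H(\tau)}\H$, so that both displayed estimates must be read intrinsically on $\H$, the pairing $\psg{\vapz(\tau)}{h}$ using $\psgH{\cdot}{\cdot}$ on that block; the normal directions $\nabla\Psi$ are precisely those that drop out of the tangential pairing, and they reappear only through the multipliers $\nu_1,\nu_2$ in the junction conditions \eqref{Jumps1}--\eqref{Jumps2} of Lemma \ref{PMPripa}, which are not invoked here since $(\Zo,\Zu)$ is arbitrary. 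I would handle this by choosing local $C^1$ coordinates on $\H$ near $Y_\H(\tau)$, in which $f_\H$ becomes an ordinary controlled vector field and the entire first-order analysis reduces to the flat Euclidean one; the invariance of the super- and subdifferentials under $C^1$ changes of variables then transports the conclusion back to $T\H$. Part (ii) is proved in the very same way, now perturbing $\vsz(\tau)$ in its role as the \emph{terminal} point of the optimal subproblem on $[\To,\tau]$; the symmetric computation reverses the sign and produces $+\vapz(\tau)$ in \eqref{linkbase1} and \eqref{linkbaseT1}.
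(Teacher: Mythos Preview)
Your proposal is correct and arrives at the same conclusions, but the route differs from the paper's. The paper does not carry out the variational/needle-type argument you sketch; instead it freezes one endpoint to define the auxiliary functions $\Vr^0(\Zo,\To)=\Vr(\Zo,\To;\Zu,\Tu)$ and $\Vr^1(\Zu,\Tu)=\Vr(\Zo,\To;\Zu,\Tu)$, observes (citing \cite{BCD,Ba}) that these are bounded Lipschitz viscosity solutions of the corresponding Hamilton--Jacobi equations for the duplicated Hamiltonian $\Hr$, and then invokes \cite[Corollary~3.45]{BCD} to obtain the inclusions \eqref{linkbase0}--\eqref{linkbase1} directly. The upgrade under (Hu) is obtained, as in your proposal, by citing \cite{CannarsaSinestrari,AubinFrankowska,ClarkeVinter}. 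In short, the paper treats the whole proposition as an instance of known viscosity-solution sensitivity theory, while you re-derive the superdifferential inclusion by hand via the linearized flow and the adjoint conservation law.

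Both approaches are legitimate. Yours is more self-contained and makes the mechanism transparent (the constancy of $\langle\vapz,\delta\vsz\rangle$ along the extremal is exactly what drives the estimate), and your treatment of the $\H$-block via local coordinates is the right way to reduce to the flat case. The one place where your sketch hides nontrivial work is the phrase ``suitably corrected admissible trajectory reaching $\Zu$ at time $\Tu$'': with a hard terminal constraint $\vsz(\Tu)=\Zu$, producing such a competitor after an initial perturbation $h$ requires a controllability/correction argument (or, equivalently, passing through the reachable-set analysis that underlies \cite[Corollary~3.45]{BCD} and \cite{ClarkeVinter}). The paper sidesteps this by citing the literature; if you keep your direct argument, you should either invoke those references at that step or supply the correction explicitly.
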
 

Proposition \ref{TEOLINKBASE} is proved in Section \ref{proof_TEOLINKBASE}.

\begin{remark} \label{remHJtotu}
It is useful  to write  equalities  \eqref{linkbaseT0} and \eqref{linkbaseT1} as a single equality.  
We have indeed 
 \begin{equation} \label{linkbaseT01}
- \nabla_{\Zo}  \Vr(\vsz(\tau),\tau;\Zu,\Tu) = \vapz (\tau)=  \nabla_{\Zu}  \Vr(\Zo,\To;\vsz(\tau),\tau) \quad   \: \quad  \forall \tau \in  [\To,\Tu]  \: 
 \end{equation} 
that is, more precisely,
\begin{eqnarray} \label{linkbase1-7}
- \nabla_1 \Vr(\vsz(\tau),\tau;\Zu,\Tu)= \vayu (\tau)=  \nabla_7 \Vr(\Zo,\To;\vsz(\tau),\tau) \quad   \: \quad  \forall \tau \in    [\To,\Tu]  \: \\
\label{linkbase2-8}
-\partial_2  \Vr(\vsz(\tau),\tau;\Zu,\Tu)= \varu(\tau)=\partial_8 \Vr(\Zo,\To;\vsz(\tau),\tau) \quad   \: \quad  \forall \tau \in    [\To,\Tu]  \: \\
\label{linkbase3-9}
- \nabla^\H_3 \Vr(\vsz(\tau),\tau;\Zu,\Tu)= \vayh (\tau)=  \nabla^\H_9 \Vr(\Zo,\To;\vsz(\tau),\tau) \quad   \: \quad  \forall \tau \in   [\To,\Tu]  \: \\
\label{linkbase4-10}
-\partial_4  \Vr(\vsz(\tau),\tau;\Zu,\Tu)= \varh(\tau)=\partial_{10} \Vr(\Zo,\To;\vsz(\tau),\tau) \quad   \: \quad  \forall \tau \in    [\To,\Tu]  \: \\
\label{linkbase5-11}
- \nabla_5 \Vr(\vsz(\tau),\tau;\Zu,\Tu)= \vayd (\tau)=  \nabla_{11} \Vr(\Zo,\To;\vsz(\tau),\tau) \quad   \: \quad  \forall \tau \in    [\To,\Tu]  \: \\
\label{linkbase6-12}
-\partial_6  \Vr(\vsz(\tau),\tau;\Zu,\Tu)= \vard(\tau)=\partial_{12} \Vr(\Zo,\To;\vsz(\tau),\tau) \quad   \: \quad  \forall \tau \in   [\To,\Tu]  \:. 
\end{eqnarray} 
Note that at times $\To$ and $\Tu$ the gradients are naturally   defined as the limits of the gradients in the open interval  $(\To,\Tu)$.
\end{remark}

\paragraph{Application to the regional optimal control problem: main result.}
We  now  establish a result that is analogous  to the one obtained for the structure 1-2. We first remark that for this structure one cannot directly define a global  adjoint vector, therefore  its role will be played by the limit of the gradient of the value function (vectors $Q_1$, $Q_2$, $Q_\H$ below).
The main result is the following.

\begin{theorem} \label{mainthm} 
Under the assumptions (H$\H$), (Hfl$_i$), (Hfl$_\H$)  and (Hu), for any  $(\xo,\too;\xf,\tf)  \in  \Omega_1 \times \R^+ \times \Omega_2 \times \R_*^+$ we have 
\begin{multline*} %\label{tesi1MAIN}
\V_{1,\H,2}(\xo,\too;\xf,\tf)= \min \big\{  \V_{1}(\xo,\too;,x_1, t_1)+\V_{\H}(x_1, t_1; x_2, t_2) + \V_{2}(x_2,t_2; \xf,\tf)  \ :\ \\
t_0 < t_1 < t_2  < \tf\: \: x_1 , x_2 \in \H \big\} .
\end{multline*}
 Let $\vsx(\cdot)$ be an  optimal trajectory for the value  function $\V_{1,\H,2}(\xo,\too;\xf,\tf)$  defined by \eqref{valoreUHD} and let
 $$
 Q_1( t_1^-)= - \lim_{t \ds t_1^- }  \nabla_{\xo} \: \V_{1,\H,2}  \big( X_1(t), t ;\xf,\tf \big) .
$$

$$
Q_2(t_2^+)=\lim_{t \ds t_2^+} \nabla_{\xf} \: \V_{1,\H,2}  \big(  \xo,\too;X_2(t), t  \big) .
$$
$$
Q_\H(t_1^+)= - \lim_{t \ds t_1^+} \nabla^\H_{\xo} \:  \V_{1,\H,2}  \big( X_\H(t), t ;\xf,\tf  \big) 
$$
 $$
Q_\H(t_2^-)= \lim_{t \ds t_2^-} \nabla^\H_{\xf} \: \V_{1,\H,2}  \big(  \xo,\too; X_\H(t), t   \big) .
$$
We have  the  continuity conditions 
\begin{equation} \label{ContTu}
\Hu(X_1(t_1^-),Q_1(t_1^-),p^0)=\Hh(X_\H(t_1^+),Q_\H(t_1^+),p^0)
\end{equation}
\begin{equation} \label{ContTd}
\Hh(X_\H(t_2^-), Q_\H(t_2^-),p^0)=\Hd(X_2(t_2^+),Q_2(t_2^+),p^0).
\end{equation}
Moreover, there exist $\nu_1 , \nu_2 \in \R$ such that 
\begin{equation} \label{saltoH}
\begin{split}
Q_\H(t_1^+)&=Q_1(t_1^-)+ \nu_1 \: \nabla \Psi(X_1(t_1^-)) ,\\
Q_2(t_2^+)&=Q_\H(t_2^-)+ \nu_2 \: \nabla \Psi(X_2(t_2^+)) . 
\end{split}
\end{equation}
Moreover, if $\psg{\nabla \Psi(X_1(t_1^-)}{f_1(t_1^-)} \neq 0$ and $\psg{ \nabla \Psi(X_2(t_2^+))}{f_2(t_2^+)} \neq 0$
then 
\begin{equation} \label{tesinu1}
 \nu_1 = \frac{\psg{Q_\H(t_1^+)}{f_1(t_1^-)} -\psgH{Q_1(t_1^-)}{f_\H(t_1^+)}+ p^0 \: ( l_1(t_1^-)- l_\H(t_1^+) ) }{\psg{\nabla \Psi(X_1(t_1^-))}{f_1(t_1^-)}}
\end{equation} 
and 
\begin{equation} \label{tesimu2}
\nu_2 = \frac{\psgH{Q_\H(t_2^-)}{f_\H(t_2^-)}- \psg{Q_\H(t_2^-)}{f_2(t_2^+)}+ p^0 \: ( l_\H(t_2^-)-l_2(t_2^+))}{\psg{ \nabla \Psi(X_2(t_2^+))}{f_2(t_2^+)}}
\end{equation}
where we used the  short notations $f_i(t_i^\pm) = f_i(X_i(t_i^\pm),\alpha_i(t_i^\pm))$ and 
$l_i(t_i^\pm) = l_i(X_i(t_i^\pm),\alpha_i(t_i^\pm))$,  with $i\in\{1,2,\H\}$.
\end{theorem}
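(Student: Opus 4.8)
The plan is to read off every assertion from the three facts already established for the duplicated problem --- the reduction of Proposition~\ref{infvalues}, the maximum principle of Lemma~\ref{PMPripa}, and the sensitivity relations of Proposition~\ref{TEOLINKBASE} (in the componentwise form of Remark~\ref{remHJtotu}) --- once one notices that, on the submanifold $\Sm(\xo,\too;\xf,\tf)$, the duplicated value function decouples. First I would establish the representation formula. On $\Sm$ the endpoints $\Zo=(\xo,\too,x_1,t_1,x_2,t_2)$ and $\Zu=(x_1,t_1,x_2,t_2,\xf,\tf)$ impose three \emph{independent} two-point conditions on the three blocks of the duplicated system \eqref{SysHybRip}: the block $(Y_1,\rou)$ runs from $(\xo,\too)$ to $(x_1,t_1)$, the block $(Y_\H,\roh)$ from $(x_1,t_1)$ to $(x_2,t_2)$, and the block $(Y_2,\rod)$ from $(x_2,t_2)$ to $(\xf,\tf)$. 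Since the dynamics and the cost $\cor$ separate completely across the three blocks, the infimum in \eqref{valore} splits into three independent minimizations; reversing the time reparametrization $s=\rho_i(\tau)$, $\mathrm{d}s=w_i\,\mathrm{d}\tau$ in each block (so that $Y_i(\tau)=X_i(\rho_i(\tau))$ solves the original state equation) identifies each piece with the corresponding regional value function. Hence $\Vr|_\Sm=\V_1(\xo,\too;x_1,t_1)+\V_\H(x_1,t_1;x_2,t_2)+\V_2(x_2,t_2;\xf,\tf)$, and minimizing over $\Sm$ as in Proposition~\ref{infvalues} gives the stated formula.

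Next I would identify the four boundary gradients $Q_1(t_1^-),Q_\H(t_1^+),Q_\H(t_2^-),Q_2(t_2^+)$ with values of the adjoint vector $\vapz$ at the endpoints $\To,\Tu$. Fixing the optimal switching data, the decoupling gives for each $\tau$ that $\nabla_1\Vr=\nabla_{\xo}\V_1$, $\nabla^\H_3\Vr=\nabla^\H_{\xo}\V_\H$, $\nabla^\H_9\Vr=\nabla^\H_{\xf}\V_\H$ and $\nabla_{11}\Vr=\nabla_{\xf}\V_2$, since only one block depends on each such variable; an envelope argument at the optimal switching data then replaces $\V_1,\V_\H,\V_2$ by $\V_{1,\H,2}$ (the implicit dependence of the switching data drops out by first-order optimality). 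Evaluating the sensitivity relations \eqref{linkbase1-7}, \eqref{linkbase3-9}, \eqref{linkbase5-11} at the appropriate endpoint of the common-time interval --- letting $\tau\to\Tu^-$, so that $\rou(\tau)\to t_1^-$ and $\roh(\tau)\to t_2^-$, and $\tau\to\To^+$, so that $\roh(\tau)\to t_1^+$ and $\rod(\tau)\to t_2^+$ --- yields
\[
Q_1(t_1^-)=\vayu(\Tu),\quad Q_\H(t_1^+)=\vayh(\To),\quad Q_\H(t_2^-)=\vayh(\Tu),\quad Q_2(t_2^+)=\vayd(\To).
\]
Through these identifications, together with $x_1=X_1(t_1^-)$ and $x_2=X_2(t_2^+)$, the transversality relations \eqref{Jumps1} and \eqref{Jumps2} of Lemma~\ref{PMPripa} become \emph{exactly} the jump conditions \eqref{saltoH}.

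For the continuity conditions I would use that $\Hrs$ is affine in each of $\wu,\wh,\wdu$, with $w_i>0$ throughout $(\To,\Tu)$ because all three arcs are traced simultaneously. Hence the maximization \eqref{condmax} forces the coefficient of each $w_i$ to vanish, i.e.\ $\Hu(Y_1,\vayu,p^0)=-\varu$, $\Hh(Y_\H,\vayh,p^0)=-\varh$ and $\Hd(Y_2,\vayd,p^0)=-\vard$ along the optimal trajectory. Evaluating the first identity at $\tau=\Tu$ and the second at $\tau=\To$ and invoking \eqref{Jumpt1} (namely $\varu(\Tu)=\varh(\To)$) gives \eqref{ContTu}; evaluating the second at $\tau=\Tu$, the third at $\tau=\To$ and invoking \eqref{Jumpt2} gives \eqref{ContTd}. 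Finally, to obtain $\nu_1,\nu_2$ I would take the Euclidean inner product of the first line of \eqref{saltoH} with $f_1(t_1^-)$ (legitimate because $\psg{\nabla\Psi(X_1(t_1^-))}{f_1(t_1^-)}\neq0$), solve for $\nu_1$, eliminate $\psg{Q_1(t_1^-)}{f_1(t_1^-)}$ using \eqref{ContTu}, and replace $\psgH{Q_\H(t_1^+)}{f_\H(t_1^+)}$ by $\psgH{Q_1(t_1^-)}{f_\H(t_1^+)}$ using that the jump $Q_\H(t_1^+)-Q_1(t_1^-)$ is parallel to $\nabla\Psi$, hence normal to $T\H$, which contains $f_\H(t_1^+)$; this yields \eqref{tesinu1}. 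The symmetric computation at $t_2$ (dotting the second line of \eqref{saltoH} with $f_2(t_2^+)$ and using \eqref{ContTd}) yields \eqref{tesimu2}.

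The crux is the second step. Making the identification of the $Q$'s rigorous simultaneously requires the block-decoupling of $\Vr$ on $\Sm$, the correspondence between the common time $\tau$ and the physical times $\rho_i$ at the two endpoints, the envelope identity transferring gradients of the single blocks to gradients of $\V_{1,\H,2}$, and the differentiability of all value functions at the relevant points --- which is precisely where assumption (Hu) enters, through Proposition~\ref{TEOLINKBASE}. One must also track the intrinsic interface gradients $\nabla^\H$ and the fact that $\nabla\Psi$ is normal to $T\H$, so that it is annihilated by the $\H$-scalar product; this normality is exactly what makes the $\nu_1\nabla\Psi$ and $\nu_2\nabla\Psi$ terms invisible to the continuity conditions while being the unique ingredient that determines $\nu_1$ and $\nu_2$.
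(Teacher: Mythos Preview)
Your proposal is correct and follows essentially the same route as the paper: you invoke Proposition~\ref{infvalues} and the block decoupling of $\Vr$ on $\Sm$ for the representation formula, you use the affine dependence of $\Hrs$ on the $w_i$ (with the optimal $w_i$ interior) together with the transversality identities \eqref{Jumpt1}--\eqref{Jumps2} to obtain the continuity and jump conditions, and your ``envelope argument'' identifying the gradients of $\V_{1,\H,2}$ with the relevant partial gradients of $\Vr$ is exactly what the paper isolates as Lemma~\ref{ugderVV}. The only cosmetic difference is that the paper introduces intermediate notations $P_1,P_\H,P_2$ (the adjoint components pulled back via the time changes $\rho_i$) and first derives the conditions for these before invoking Lemma~\ref{ugderVV} to replace them by the $Q$'s, whereas you perform this identification up front; also, the paper's algebraic derivation of $\nu_1$ substitutes the jump relation into the continuity condition rather than dotting the jump relation with $f_1$, but the two manipulations are equivalent and both rest on $\psgH{\nabla\Psi}{f_\H}=0$.
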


Theorem \ref{mainthm} is proved in Section \ref{proof_mainthm}.

\begin{remark} \label{remmain} 
 Note the similarity between the jump conditions  \eqref{saltoH}-\eqref{tesimu2} and the  ones in the transversal case \eqref{conHT12}: the difference is  due to the  fact that $\H$  is of codimension $1$.   
 \end{remark}

\subsection{More general structures}  
Proceeding as in Section \ref{sec1H2}, the analogue of Proposition \ref{infvalues} is obtained for any other structure 1-2-$\H$-1, 1-$\H$-1-2, 1-2-$\H$-2, etc, in a similar way. For each given such structure, the duplication technique permits to lift the corresponding regional control problem to a classical (i.e., non-regional) optimal control problem in higher dimension, and then the value function of the regional optimal control problem is written as the minimum of the value function of the high-dimensional classical optimal control problem over a submanifold, this submanifold representing the junction conditions of the regional problem (continuity conditions on the state and jump conditions on the adjoint vector).

For example, consider optimal trajectories with the structure 2-$\H$-2-1, i.e., trajectories starting in $\Omega_2$, staying in $\Omega_2$ along the time interval $[\too,t_1)$, then lying in $\H$ on $[t_1,t_2]$, then going back to $\Omega_2$ on $(t_2,t_3]$ and finally staying in $\Omega_1$ in the time interval $(t_3,\tf]$.   
Then, the duplicated problem has four arcs and is settled in dimension $8$. The whole approach developed previously can be applied as well and we obtain the corresponding analogues of Proposition \ref{infvalues} and then of Theorem \ref{mainthm}. 

In such a way, all possible structures can be described as composed of a finite succession of arcs, and are analyzed thanks to the duplication technique. If the structure has $N$ arcs then  the duplicated problem is settled in dimension $2N$. 

As already said, from a practical point of view it is reasonable to restrict the search of optimal trajectories over all possible trajectories having only a finite number of switchings. This is always what is done in practice because, numerically and in real-life implementation, the Zeno phenomenon is not desirable.
Under such an assumption, our approach developed above shows that the value function of the regional optimal control problem can be written as
\begin{equation*}  \label{INFGEN}
U = \inf \{ \V_{1,2},\ \V_{1,\H,2} ,\  \V_{1,2,\H,2}  , \ldots \} ,
\end{equation*} 
where each of the value functions $\V_\star$ is itself the minimum of the value function of a classical optimal control problem (in dimension that is the double of the number of switchings of the corresponding structure) over terminal points running in some submanifold.
An interesting consequence is that:
\begin{quote}
\emph{The regularity of the value function $U$ of the regional optimal control problem is the same (i.e., not more degenerate) than the one of the higher-dimensional classical optimal control problem that lifts the problem}. 
\end{quote}

The lifting duplication technique may thus be seen as a kind of \emph{desingularization}, showing that the value function of the regional optimal control problem is the minimum over all possible structures of value functions associated with classical optimal control problems settled over fixed structures, each of them being the restriction to some submanifold of the value function of a classical optimal control problem in higher dimension.

In particular, if for instance all value functions above are Lipschitz then the value function of the regional optimal control problem is Lipschitz as well.
Note that Lipschitz regularity is ensured if there is no abnormal minimizer (see \cite{TJOTA}), and this sufficient condition is generic in some sense (see \cite{Chitour_Jean_TrelatJDG,Chitour_Jean_Trelat}).

Such a regularity result is new in the context of regional  optimal control problems. 

\begin{remark}
In this paper, for the sake of simplicity we have analyzed regional problems in $\R^N$. Since all  arguments are local, the same procedure can be applied to regional problems settled on a smooth manifold, which is stratified as $M=M^0 \cup M^1\cup \dots M^N$ (disjoint union) where $M^j$ is a $j$-dimensional embedded submanifold of $M$.     
\end{remark}

\begin{remark}
Our results can also be straightforwardly extended to time-dependent dynamics and running costs, and to regions $\Omega_i(t)$ depending on time, always assuming at least a $C^1$-dependence. 
\end{remark}

\subsection{What happens in case of Zeno phenomenon?}
In case the \emph{Zeno phenomenon} occurs, optimal trajectories oscillate for instance between two regions $\Omega_1$ and $\Omega_2$ an infinite number of times over a compact time interval. 

If the number of switchings is countably infinite, then the above procedure can, at least formally, be carried out, but then the duplicated (lifted) problem is settled in infinite (countable) dimension. In order to settle it rigorously, much more functional analysis work would be required. Anyway, formally the value function is then written as an infimum of countably many value functions of classical optimal control problems, but even if the latter are regular enough (for instance, Lipschitz), taking the infimum may break this regularity and create some degeneracy.

If the number of switchings is uncountably infinite, the situation may even go worst. The duplication technique cannot be performed, at least in the form we have done it, and we do not know if there would exist a somewhat related approach to capture any information. The situation is widely open there.
We are not aware of any example of a regional (or, more generally, hybrid) optimal control problem for which the set of switching points of the optimal trajectory would have a fractal structure. Notice the related result stated in \cite{Ag}, according to which, for smooth bracket generating single-input control-affine systems with bounded scalar controls, the set of switching points of the optimal bang-bang controls cannot be a Cantor set.

\section{Example} \label{sec_example}
As an example  we consider here a  simple regional optimal control problem where it is easy to see that a trajectory of the form 1-$\H$-2 is the best possible choice. The idea is to model situations where it is optimal  to move along the interface $\H$ as long as possible. One can think, for example, of a pedestrian walking in $\Omega_1$ and $\Omega_2$ with the possibility of taking a tramway along $\H$ at any point of this interface $\H$. 

More generally, this example models any  problem where moving along a direction is much faster and/or cheaper than along others.  

In $\R^2$ we set $\Omega_1=\{ (x,y) \: : \: y < 0\}$, $\Omega_2=\{ (x,y) \: : \: y > 0\}$ and $\H=\{ (x,y) \: : \: y =0\}$. \\
%We consider $\xo=(-10,-1)$, $\xf=(10,1)$, $\too=0$, $\tf=4$ and 
We choose the dynamics 
 \begin{equation*}
f_1 (X_1, \alpha_1)=\apt \begin{array}{l}
\cos(\alpha_1) \\
\sin(\alpha_1) 
 \end{array}  \cht
 , \:  \quad 
 f_\H(X_\H,\alpha_\H)=10
 , \:   \quad
  f_2 (X_2, \alpha_2)=\apt \begin{array}{l}
\cos(\alpha_2) \\
\sin(\alpha_2) 
 \end{array}  \cht
\end{equation*}
where the controls $\alpha_i$ take values on $[-\pi,\pi]$. We consider the minimal time problem, 
therefore our aim is to compute the  value function 
$$
U(\xo,0;\xf)= \inf \apg   \tf  \:   :   \:  \dot{\vsx}(t) = f(\vsx(t),\a(t)) \quad \mbox{ with }\quad \:  \vsx(t^0)=x^0,  \quad \vsx(t^f)=x^f \chg ,  
$$ 
where  the dynamics $f$ coincide with $f_1$,$f_2$, $f_\H$ respectively in $\Omega_1$, $\Omega_2$, $\H$.

We  analyze the case where we start  from a point $(x_0,y_0)$ in $\Omega_1$ and we aim to reach a point $(x_1,y_1)$ in $\Omega_2$ with 
$x_1 > x_0$. In $\Omega_1$, the dynamics $f_1$ allow  to move with constant velocity equal to one in any direction, therefore  it is clear that the best choice is to go ``towards $\H$ but also in the direction of $x_1$".  Indeed, if we compare  on Figure  \ref{figexample} below  the dotted trajectory and the black one, they spend the same  time in $\Omega_1$, but on $\H$ the dotted one is not the minimal time. 
Therefore the black one is  a better choice.

 \begin{figure}[h]
\begin{center}
{ \resizebox{11cm}{!}{\input 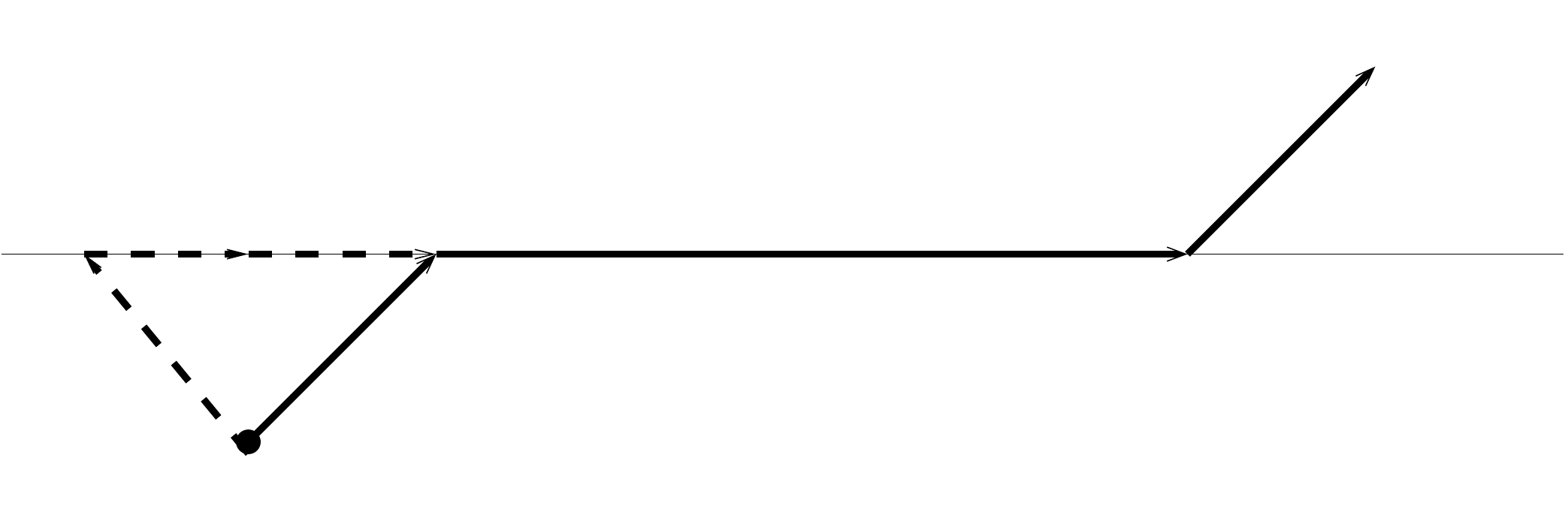_t} }
\end{center}
\caption{Going ``to the left" is not optimal.}\label{figexample}
\end{figure}

For this reason, and since the problem is  symmetric, it is not restrictive to assume  that $y_1=-y_0$ and that  trajectories with the structure 1-$\H$-2 are like the ones described on Figure \ref{figexample2} with $\dis 0 \leq a \leq \frac{x_1-x_0}{2}$.

\begin{figure}[h]
\begin{center}
{ \resizebox{11cm}{!}{\input 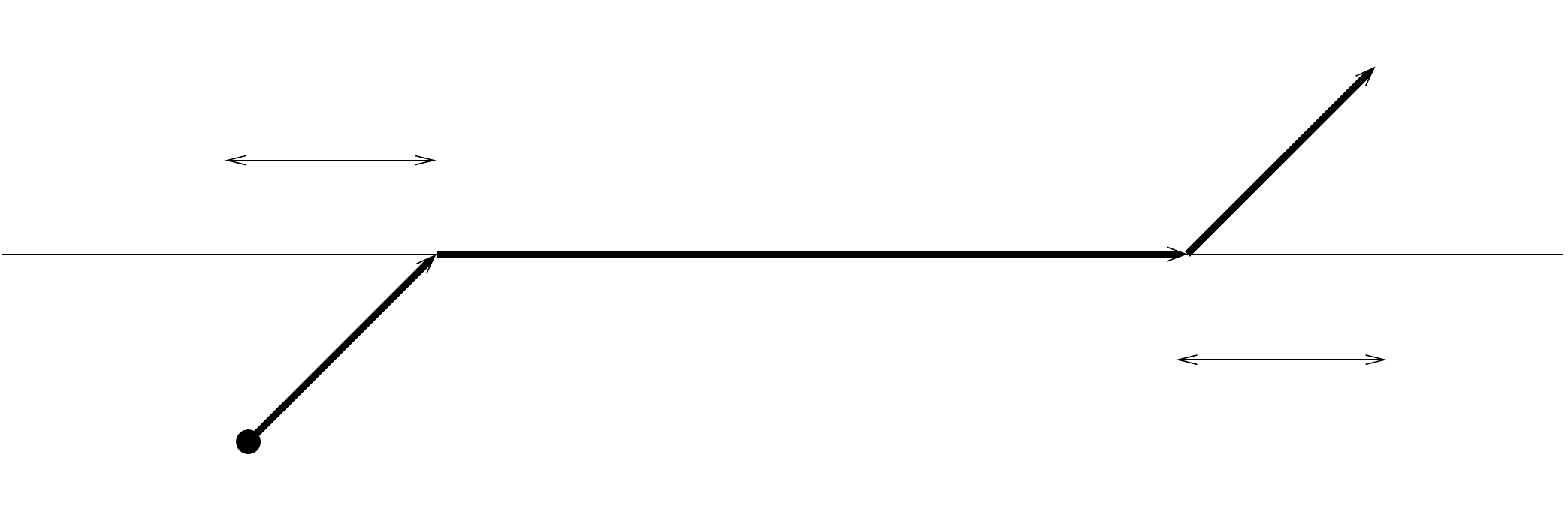_t} }
\end{center}
\caption{The trajectory 1-$\H$-2 .}\label{figexample2}
\end{figure}

For each trajectory steering $(x_0,y_0)$ to $(x_1,-y_0)$ a simple  computation gives the  cost 
(as a function of the parameter $a$)
$$
 C(a)=2 \sqrt{y_0^2+a^2} + \frac{x_1-x_0}{10}- \frac{a}{5}.
$$
Therefore, the value function is
$$
U(\xo,0;\xf)= \min_{0 \leq a \leq \frac{x_1-x_0}{2}} \:  \apt 2 \sqrt{y_0^2+a^2} + \frac{x_1-x_0}{10}- \frac{a}{5}  \cht \:.   
$$
and we obtain that:
\begin{itemize}
\item if  $\dis  \: \frac{x_1-x_0}{2} > \frac{|y_0|}{3 \sqrt{11}}$ then the optimal  trajectory has the structure 1-$\H$-2 with $a=\frac{|y_0|}{3 \sqrt{11}}$ and the optimal final time is  $\dis t_f=\frac{19}{3\sqrt{11}}- \frac{x_1-x_0}{10}$.
\item if $\dis \:   \frac{x_1-x_0}{2} \leq  \frac{|y_0|}{3 \sqrt{11}}$ then the optimal trajectory has the structure 1-2 with  $\dis a=\frac{x_1-x_0}{2}$  and  the optimal final time is  $\dis t_f= 2 \sqrt{y_0^2+\frac{(x_1-x_0)^2}{4}}$ (see Figure \ref{figexample3}).
\end{itemize}

\begin{figure}[h]
\begin{center}
{ \resizebox{11cm}{!}{\input 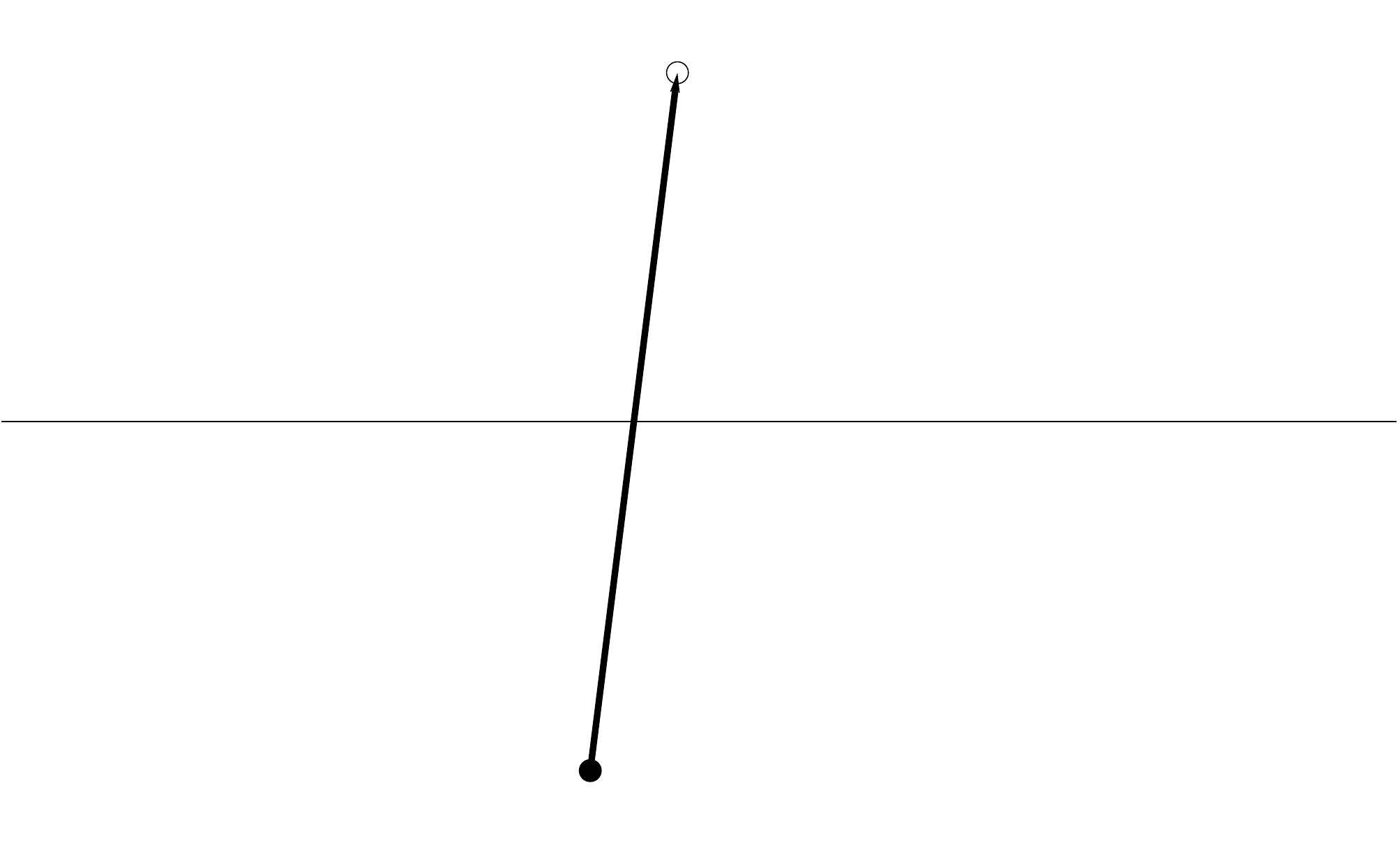_t} }
\end{center}
\caption{The trajectory 1-2.}\label{figexample3}
\end{figure}

We finally  remark  that, although this example is very simple, it is paradigmatic and illustrates many possible situations where one has two regions of the space (with specific dynamics) separated by an interface along which the dynamics are quicker than in the two regions. In this sense, the above example can be  adapted and complexified to represent some more realistic situations.

\section{Proofs}\label{sec_proofs}
\subsection{Proof of Proposition \ref{infvalues}}\label{proof_infvalues}
Fix $(\xo,\too;\xf,\tf) \in \overline{\Omega}_1 \times \R^+_* \times \overline{\Omega}_2 \times \R^+_*$,  with $\xo \neq \xf$ and
$t_1,t_2$ such that 
 $\tf > t_2 > t_1 > \too$.  Let $\vsx$ be the corresponding trajectory  solution of  \eqref{SysHyb}-\eqref{CondMixte}-\eqref{SCHstay}. 
 We construct three increasing $C^1$ diffeomorphisms:
$$ 
\rou : [\To,\Tu] \ds [\too,t_1] , \quad  \roh : [\To,\Tu] \ds [t_1,t_2] , \quad  \rod : [\To,\Tu] \ds [t_2,\tf]
$$
with $0 \leq \To < \Tu$ arbitrarily chosen. We solve then  the state equation \eqref{SysHybRip}  with controls 
$$
 \wu(\tau)=\rou^\prime(\tau) , \quad  \wdu(\tau)=\rod^\prime(\tau),     \quad  \wh(\tau)=\roh^\prime(\tau) ,
$$ 
$$
 \vu(\tau)=\alpha_1(\rou(\tau))=\alpha_1(t) , \quad  \vh(\tau)=\a_\H(\roh(\tau))=\a_\H(t), \:  \quad  \vd(\tau)=\alpha_2(\rod(\tau))=\alpha_2(t) ,
$$
and  initial and final data
$$
\Zob=(\xo,\too, X_1(t_1),t_1, X_2(t_2),t_2) , \: \Zub=(X_1(t_1),t_1,X_2(t_2),t_2,\xf,\tf )  \ . 
$$
Therefore $(\Zob,\Zub) \in \Sm(\xo,\too;\xf,\tf)$ and the duplicated trajectory  is such that   
 \begin{equation*}  \label{UgTray}
 Y_1(\tau)=X_1(\rou(\tau))=X_1(t) , \quad Y_\H(\tau)=X_\H(\roh(\tau))=X_\H(t) , \quad
 Y_2(\tau)=X_2(\rod(\tau))=X_2(t) ,
 \end{equation*}
 for any $t \in (\too,\tf)$, $\tau \in (\To,\Tu)$.
 Moreover, by  the above change of time variable we have 
 \begin{multline*} 
\dis \int_{\too}^{t_1} l_1(X_1(t),\alpha_1(t))  \: dt  + \int_{t_1}^{t_2} l_\H(X_\H(t),\a(t))  \:  dt +\int_{t_2}^{\tf} l_2(X_2(t),\alpha_2(t))   \: dt \\
=\int_{\To}^{\Tu} \Big(  l_1(Y_1(\tau),\vu(\tau))\wu(\tau)   +l_\H(Y_\H(\tau),\vh(\tau))\wh(\tau)  
+ l_2(Y_2(\tau),\vd(\tau))\wdu(\tau) \Big) \: d\tau . 
\end{multline*} 
Hence $ \co(\xo,\too;\xf,\tf; \vsx)=\cor(\vsz)$.  Conversely, since the time change of variable $\rho$  is invertible 
given 
$(\Zo,\Zu) \in \Sm(\xo,\too;\xf,\tf)$ 
and a corresponding admissible trajectory 
$\vsz$ we can construct  a trajectory  $\vsx$ such that $\cor(\vsz)=\co(\xo,\too;\xf,\tf; \vsx)$ and the proof is completed.

\subsection{Proof of Lemma \ref{PMPripa}}\label{proof_PMPripa}
The result follows by applying the usual Pontryagin maximum principle (see \cite{Po}).   
If $\Zo,  \Zu  \in \Sm(\xo,\too;\xf,\tf)$,  the classical transversality  condition  holds  (see \cite[Theorem 12.15]{AgSa} or \cite{Tlibro}):
\begin{equation*} %\label{tranconto}
(-\vapz(T_0), \vapz(T_1)) \: \:   \bot  \: \: T_{ ( \vsz(T_0), \vsz(T_1))} \:   \: \Sm(\xo,\too;\xf,\tf) .
\end{equation*}
Now, if $\vsz(T_0)=\Zo=(\xo,\too,x_1,t_1,x_2,t_2)$ and $\vsz(T_1)=\Zu=(x_1,t_1,x_2,t_2,\xf,\tf)$
 the above relation gives: 
 \begin{itemize}
 \item $t_1=Z_0^4=Z_1^2$  implies  $\varh(T_0)=\varu(T_1)$;
\item $t_2=Z_0^6=Z_1^4$ implies $\vard(T_0)= \varh(T_1)$;   
\item $x_1=Z_0^3=Z_1^1$ and $x_1 \in \H$ imply  $\vayh(T_0)=\vayu(T_1)+\nu_1 \:  \nabla \Psi(x_1)$;
\item $x_2=Z_0^5=Z_1^3$ and $x_2 \in \H$ imply $\vayd(T_0)= \vayh(T_1)+ \nu_2  \:  \nabla \Psi(x_2)$.
\end{itemize}
The result follows.

\subsection{Proof of Proposition \ref{TEOLINKBASE}}
 To apply  the classical theory of viscosity solutions for  Hamilton-Jacobi equations, we define   
 two different value functions by 
considering  separately the case when we fix  the initial  data $(\Zo,\To)$ and we consider a function the final data $(\Zu,\Tu)$ or conversely. 
 Precisely, to prove $(i)$ we fix  $(T_1,\Zu)$ and   for any  
$\To \in  \R_*^+$,  $\Zo  \in \overline{\Omega}_1 \times \R_*^+ \times \H \times \R_*^+ \times \overline{\Omega}_2 \times \R_*^+$  
we define $\Vr^0(\Zo,\To)= \Vr(\Zo,\To,\Zu,\Tu)$.  \\
Similarly to prove $(ii)$,   $(T_0,\Zo)$ is given and  for any  
$\Tu \in  \R_*^+$, $\Zu  \in  \overline{\Omega}_1 \times \R_*^+ \times \H \times \R_*^+ \times \overline{\Omega}_2 \times \R_*^+$  we 
set  $\Vr^1(\Zu,\Tu)=\Vr(\Zo,\To,\Zu,\Tu)$.

In order to write the partial derivatives of $\Vr^0$ and $\Vr^1$ we consider  a generic function 
  $u(\vsz,t):  \big(\Omega_1 \times \R_*^+ \times \H \times \R_*^+ \times \Omega_2 \times \R_*^+ \big)  \times \R^+  \rightarrow \R$ and 
we will enumerate the  variables  as follows $\vsz=(1,2,3,4,5,6)$. Therefore $\pt_i u(\vsz)$ takes values in 
$\R$ for $i=2,4,6$, $ \ps_i \Vr(\vsz) $  takes values in $\R^N$ for $i=1,5$ and   $ \nabla_{\H,i} \Vr $ in $T \H$ for $i=3$. 
We will set 
$$
\nabla  u = ( \ps_1 u ,\pt_2 u,\ps_{\H,3} u,\pt_4 u ,\ps_5 u ,\pt_6 u)  \qquad u_t (\vsz,t)= - \frac{\partial u}{ \partial t} (\vsz,t).
$$
Moreover, we  respectively denote by $D^{+} u $ and  $D^{-} u $  the classical super- and sub-differential in  the space variables $1,3,5$. 
We have 
$$
 \nabla_{\Zo}  \Vr(\Zo,\To; \Zu, \Tu) = \nabla \Vr^0 (\Zo,\To) \quad   \mbox{ and }  \quad \nabla_{\Zu}  \Vr(\Zo,\To; \Zu, \Tu) = \nabla \Vr^1 (\Zu,\Tu) . 
$$
By applying the standard theory of viscosity solution   (see, e.g., \cite[Propositions 3.1 and 3.5]{BCD}, see also \cite{Ba})
we know that  $\Vr^0(\Zo,\To)$ is  a bounded, Lipschitz continuous viscosity solution of 
\begin{equation*} %\label{HJdatiini} 
- \frac{\partial u}{ \partial t} (\vsz,t)+ \Hr\Big(\vsz , -\nabla u, -1 \Big)=0  \quad \mbox{ in }(\Omega_1 \times \R_*^+ \times \H \times \R_*^+ \times \Omega_2  
\times \R_*^+) \times  (\To,\Tu)  ,
\end{equation*}
and   $\Vr^1(\Zu,\Tu)$ is a bounded, Lipschitz continuous  viscosity solution of 
\begin{equation*} %\label{HJdatifin} 
 \frac{\partial u}{ \partial t} (\vsz,t)+ \Hr\Big(\vsz ,  \nabla u , -1 \Big)=0  \quad \mbox{  in } (\Omega_1 \times \R_*^+ \times \H \times \R_*^+ \times \Omega_2  
\times \R_*^+) \times (\To,\Tu) . 
\end{equation*}
Therefore, we can  apply  \cite[Corollary 3.45]{BCD} to obtain \eqref{linkbase0} and \eqref{linkbase1}.
 Now, if assumption (Hu) holds,  one can prove that the two functions  
$\tau \mapsto  \Vr^0(\vsz(\tau),\tau)$ and $\tau \mapsto \Vr^1(\vsz(\tau),\tau)$
 are differentiable    (see \cite[Theorem 7.4.16 ]{CannarsaSinestrari}  or 
 \cite{AubinFrankowska,ClarkeVinter})
 thus  \eqref{linkbaseT0} and   \eqref{linkbaseT1}  follow.

\subsection{Proof of Theorem \ref{mainthm}}\label{proof_mainthm}
Fix  $(\xo,\too;\xf,\tf)  \in \Omega_1 \times \R^+ \times \Omega_2 \times \R^+_* $.  
 To obtain the first result   we rewrite the equality \eqref{tesilem} of Proposition \ref{infvalues} as  
\begin{multline} \label{tesiVVnot}
\V_{1,\H,2}(\xofo,\xoff)= \inf  \Big\{  \Vr((\xofo,\xud);(\xud,\xoff))    \: : \:   \xud=(x_1,t_1,x_2,t_2), \  \Psi(x_1)=0 , \Psi(x_2)=0 ,  \\ \tf> t_2 > t_1 > \too  \Big\}
\end{multline}
where we  set   $\xof=(\xofo,\xoff)=(\xo,\too;\xf,\tf)$ and  $\xud=(x_1,t_1,x_2,t_2)$.  Thus, by the construction of the duplicated value function $\Vr$ we have  
\begin{multline*} %\label{ugufinale}
\V_{1,\H,2}(\xo,\too;\xf,\tf)= \min \big\{  \V_{1}(\xo,\too;,x_1, t_1)+\V_{\H}(x_1, t_1; x_2, t_2) + \V_{2}(x_2,t_2; \xf,\tf) \ :\   \\   t_0 < t_1 < t_2< \tf , \  x_1 , x_2 \in \H  \big\} .
\end{multline*}
%\begin{equation*}
%\V_{1,\H,2}(\xo,\too;\xf,\tf)= \min  \Big\{  \Vr(\Zo,\To;\Zu,\Tu)    \: : \:   (\Zo,\Zu) \in \Sm(\xo,\too;\xf,\tf) \Big\}.
%\end{equation*}
Thanks to  \eqref{tesilem}  in Proposition \ref{infvalues}  we can   consider now $(\Zo,\Zu) \in \Sm(\xo,\too;\xf,\tf)$ such that $\V_{1,\H,2}(\xo,\too;\xf,\tf)=\Vr(\Zo,\To;\Zu,\Tu)$ for an optimal trajectory 
$\vsz(\cdot) \in {\cal  Z}_{(\Zo,\To),(\Zu,\Tu)}$  (note that we have $\V_{1,\H,2}(\xo,\too;\xf,\tf)=\Vr(\Zo,\To;\Zu,\Tu)=C(\vsz) $). 
Let $\vapz(\cdot)$ be the adjoint vector given by Theorem \ref{PMPripa}, the maximality condition \eqref{condmax} implies that   
\begin{eqnarray} \label{jumpresultripa1P}
\psg{ \vayu(\tau)}{f_1(Y_1(\tau),\vu(\tau))} + \varu(\tau)  \:  +  p^0  \: l_1(Y_1(\tau),\vu(\tau)) & =  0    \\ 
 \label{jumpresultripa2P}
\psgH{ \vayh(\tau)}{f_\H(Y_\H(\tau),\vh(\tau))} + \varh(\tau)  \: +  p^0  \: l_\H(Y_\H(\tau),\vh(\tau))  & =  0    \\ 
 \label{jumpresultripa3P}
\psg{\vayd(\tau)}{ f_2(Y_2(\tau),\vd(\tau))} + \vard(\tau)  \:  +  p^0 \: l_2(Y_2(\tau),\vd(\tau)) & =  0  
\end{eqnarray}
for almost every $\tau \in (T_0,T_1)$.
Moreovever, by the transversality condition   in Theorem \ref{PMPripa},   there  exist $\nu_1, \nu_2 \in \R$ such that 
\begin{eqnarray}  
\label{Jumpt1dim}
\varh(T_0)&=&\varu(T_1) \\
\label{Jumpt2dim}
\vard(T_0)&=& \varh(T_1) \\
\label{Jumps1dim} 
\vayh(T_0)&=&\vayu(T_1)+\nu_1 \:  \nabla \Psi(x_1) \\
\label{Jumps2dim} 
\vayd(T_0)&=& \vayh(T_1)+ \nu_2  \:  \nabla \Psi(x_2) .
\end{eqnarray}
Our aim is now to interpret  these equalities on the original problem. 
 By  definition of the duplicated problem, we construct an optimal  trajectory  $\vsx(\cdot)$ for 
 $\V_{1,\H,2}(\xo,\too;\xf,\tf)$,
  such that 
 \begin{eqnarray*} 
   Y_1(\tau)=X_1(\rou(\tau))=X_1(t) &  \forall \tau \in (\To,\Tu) \quad \forall t \in (\too,t_1) \\
   Y_\H(\tau)=X_\H(\roh(\tau))=X_\H(t) & \forall \tau \in (\To,\Tu) \quad \forall t \in (t_1,t_2) \\
   Y_2(\tau)=X_2(\rod(\tau))=X_2(t) & \forall \tau \in (\To,\Tu) \quad \forall t \in (t_2,\tf) 
 \end{eqnarray*}
Indeed, we recall that by construction $C(\vsx)=C(\vsz)= \V_{1,\H,2}(\xo,\too;\xf,\tf)=\Vr(\Zo,\To;\Zu,\Tu)$.
We set now
 \begin{eqnarray*} %\label{defPt}
 \vaxu(t)=\vayu(\tau)= \vayu(\rou(t))  &  \forall \tau \in (\To,\Tu) \quad \forall t \in (\too,t_1) \\
 \vaxh(t)= \vayh(\tau)= \vayh(\roh(t))  & \forall \tau \in (\To,\Tu) \quad \forall t \in (t_1,t_2) \\
   \vaxd(t)= \vard(\tau)= \vard(\rod(\tau)) & \forall \tau \in (\To,\Tu) \quad \forall t \in (t_2,\tf) .
 \end{eqnarray*}
 Therefore,  by  definition of the Hamitonians  $\Hus$, $\Hhs$ and $\Hds$, the equalities  \eqref{jumpresultripa1P}-\eqref{jumpresultripa3P}  give  
 \begin{eqnarray*} 
\Hus(X_1(t),\vaxu(t),p^0,\alpha_1(t))=-  \varu(\tau)  \\
\Hhs(X_\H(t),\vaxh(t),p^0,\a_\H(t))=-  \varh(\tau)  \\
\Hds(X_2(t),\vaxd(t),p^0,\alpha_2(t))=-  \vard(\tau).
\end{eqnarray*}
for almost every  $t \in (\too,\tf)$, $\tau \in (T_0,T_1)$. \\
To obtain the continuity conditions on the Hamiltonians we consider the above  equalities at   times $t_1$, $t_2$. By construction of the time change of variable and the continuity of the adjoint vector we have
\begin{multline*}
\dis  \Hus(X_1(t_1^-),\vaxu(t_1^-),p^0,\alpha_1(t_1^-)) =\lim_{t \ds t_1 ^-}  \Hus(X_1(t),\vaxu(t),p^0,\alpha_1(t)) =  \lim_{\tau \ds \Tu}  (-  \varu(\tau))= -\varu(T_1)
\end{multline*}
\begin{multline*}
\dis  \Hhs(X_\H(t_1^+),\vaxh(t_1^+),p^0, a_\H(t_1^+)) =\lim_{t \ds t_1 ^+}  \Hhs(X_\H(t),\vaxh(t),p^0,a_\H(t)) \\   =  \lim_{\tau  \ds \To}  (-  \varh(\tau))= -\varh(T_0)
\end{multline*}
\begin{multline*}
\dis  \Hhs(X_\H(t_2^-),\vaxh(t_2^-),p^0,a_\H(t_2^-))=\lim_{t \ds t_2 ^-}  \Hhs(X_\H(t),\vaxh(t),p^0,a_\H(t)) \\   = \lim_{\tau \ds \Tu}  (-  \varh(\tau))=-\varh(T_1)
\end{multline*}
\begin{multline*}
\dis  \Hds(X_2(t_2^+),\vaxd(t_2^+),p^0,\alpha_2(t_2^+)) =\lim_{t \ds t_2 ^+}  \Hds(X_2(t),\vaxd(t),p^0,\alpha_2(t))=  \lim_{\tau  \ds \To}  (-  \vard(\tau))= -\vard(T_0) . 
\end{multline*}
Since by  \eqref{Jumpt1dim}, \eqref{Jumpt2dim} we have  $\varh(T_0)=\varu(T_1)$ and $\varh(T_1)=\vard(T_0)$,
the above equalities give 
\begin{equation*} %\label{ContHt1} 
\Hus(X_1(t_1^-),\vaxu(t_1^-),p^0,\alpha_1(t_1^-))=\Hhs(X_\H(t_1^+),\vaxh(t_1^+),p^0,a_\H(t_1^+)) \:  
\end{equation*}
 \begin{equation*} %\label{ContHt2} 
 \Hhs(X_\H(t_2^-),\vaxh(t_2^-),p^0,a_\H(t_2^-))=\Hds(X_2(t_2^+),\vaxd(t_2^+),p^0,\alpha_2(t_2^+)) , 
\end{equation*}
therefore,   by the optimality of the trajectory, we can conclude that 
\begin{equation} \label{ContTudim}
\Hu(X_1(t_1^-),\vaxu(t_1^-),p^0)=\Hh(X_\H(t_1^+),\vaxh(t_1^+),p^0)
\end{equation}
\begin{equation} \label{ContTddim}
\Hh(X_\H(t_2^-),\vaxh(t_2^-),p^0)=\Hd(X_2(t_2^+),\vaxd(t_2^+),p^0) .
\end{equation}
To obtain the jump conditions on the adjoint vector we exploit the transversality conditions on the duplicated problem (\eqref{Jumps1} and \eqref{Jumps2} in  Theorem \ref{PMPripa}).  By  applying  the usual change  of variable  in 
\eqref{Jumps1dim}  and \eqref{Jumps2dim}
we have  
\begin{equation} \label{ConcontpOP}
\vaxh(t_1^+)=\vaxu(t_1^-)+ \nu_1 \: \nabla \Psi(X_1(t_1^-)) \quad \mbox{ and } \quad \vaxd(t_2^+)=\vaxh(t_2^-)+ \nu_2 \: \nabla \Psi(X_2(t_2^+)) .
\end{equation}
Note now that by definition of $\Hus$, $\Hds$, $\Hhs$ the continuity conditions \eqref{ContTudim}-\eqref{ContTddim} read
\begin{equation} \label{ContHt1exp} 
\psg{\vaxu(t_1^-)}{f_1(t_1^-)} + p^0 \: l_1(t_1^-) = \psgH{\vaxh(t_1^+)}{f_\H(t_1^+)} + p^0 \: l_\H(t_1^+) 
\end{equation}
\begin{equation} \label{ContHt2exp} 
\psgH{\vaxh(t_2^-)}{f_\H(t_2^-)} + p^0 \: l_\H(t_2^-) =\psg{\vaxd(t_2^+)}{f_2(t_2^+)}+ p^0 \: l_2(t_2^+) 
\end{equation}
where we used the  short notations $f_i(t_i^\pm)=f_i(X_i(t_i^\pm),\alpha_i(t_i^\pm))$ and 
$l_i(t_i^\pm)=l_i(X_i(t_i^\pm),\alpha_i(t_i^\pm))$ with  $i\in\{1,2,\H\}$. 
By using twice $\vaxh(t_1^+)=\vaxu(t_1^-)+ \nu_1 \: \nabla \Psi(X_1(t_1^-))$ and by recalling that by construction  
$ \psgH{\nabla \Psi(X_1(t_1^-))}{f_\H(t_1^+)}=0$ the equality  \eqref{ContHt1exp}  becomes
$$
 \nu_1\:  \psg{\nabla \Psi(X_1(t_1^-))}{f_1(t_1^-)}= \psg{\vaxh(t_1^+)}{f_1(t_1^-)} -\psgH{\vaxu(t_1^-)}{f_\H(t_1^+)}+ p^0 \: ( l_1(t_1^-)- l_\H(t_1^+) ) 
 $$
thus
$$
 \nu_1 = \frac{\psg{\vaxh(t_1^+)}{f_1(t_1^-)} -\psgH{\vaxu(t_1^-)}{f_\H(t_1^+)}+ p^0 \: ( l_1(t_1^-)- l_\H(t_1^+) ) }{\psg{\nabla \Psi(X_1(t_1^-))}{f_1(t_1^-)}} ,
$$
since by assumption  $\psg{\nabla \Psi(X_1(t_1^-))}{f_1(t_1^-)} \neq 0$.
Similarly, if we replace $\vaxd(t_2^+)=\vaxh(t_2^-)+ \nu_2 \: \nabla \Psi(X_2(t_2^+))$ in \eqref{ContHt2exp}  we obtain 
$$
\psgH{\vaxh(t_2^-)}{f_\H(t_2^-)} + p^0 \: l_\H(t_2^-) =\psg{\vaxh(t_2^-)+ \nu_2 \: \nabla \Psi(X_2(t_2^+))}{f_2(t_2^+)}+ p^0 \: l_2(t_2^+) .
$$
Thus 
%$$
%\nu_2 \: \psg{ \nabla \Psi(X_2(t_2^+))}{f_2(t_2^+)}= \psgH{\vaxh(t_2^-)}{f_\H(t_2^-)}- \psg{\vaxh(t_2^-)}{f_2(t_2^+)}+ p^0 \: ( l_\H(t_2^-)-l_2(t_2^+))
%$$ 
\begin{equation*}
\nu_2 = \frac{\psgH{\vaxh(t_2^-)}{f_\H(t_2^-)}- \psg{\vaxh(t_2^-)}{f_2(t_2^+)}+ p^0 \: ( l_\H(t_2^-)-l_2(t_2^+))}{\psg{ \nabla \Psi(X_2(t_2^+))}{f_2(t_2^+)}} \: 
\end{equation*}
thanks to the assumption $\psg{ \nabla \Psi(X_2(t_2^+))}{f_2(t_2^+)} \neq 0$.
%and this conclude the proof of \eqref{tesimu2}.  

In order to conclude the proof we need, roughly  speaking, to  replace $P_1$, $P_2$ $P_\H$ by $Q_1$, $Q_2$ and $Q_\H$. 
To this aim  we  compute the  relation between $P_1$, $P_2$ $P_\H$ and the derivatives of $S_{1,\H,2}$. This is done  in Lemma   \ref{ugderVV} hereafter.

 \begin{lemma} \label{ugderVV}  
 Under the assumptions  (H$\H$), (Hfl$_i$), (Hfl$_\H$)  and (Hu), given   $(\xo,\too;\xf,\tf) \in \Omega_1 \times \R^+_* \times \Omega_2 \times \R^+_*$, if $\xud=(x_1,t_1,x_2,t_2)=\xud((\xo,\too;\xf,\tf))$ is a minimum point in \eqref{tesiVVnot}, then
\begin{equation} \label{ugderVV1}
\frac{\partial  }{\partial \too} \: \V_{1,\H,2} \big( \xo,\too;\xf,\tf\big)= \pt_2 \Vr \big( (\xo,\too,\xud),(\xud,\xf,\tf)   \big) 
\end{equation}  
 \begin{equation*} %\label{ugderVV2}
\frac{\partial  }{\partial \tf} \: \V_{1,\H,2}  \big( \xo,\too;\xf,\tf\big) = \pt_{12} \Vr \big((\xo,\too,\xud),(\xud,\xf,\tf)\big) 
\end{equation*} 
  \begin{equation} \label{ugderVV3}
\nabla_{\xo} \: \V_{1,\H,2}  \big( \xo,\too;\xf,\tf \big) = \ps_{1} \Vr \big(  (\xo,\too,\xud),(\xud,\xf,\tf)   \big) 
\end{equation}  
  \begin{equation*} %\label{ugderVV4}
\nabla_{\xf} \: \V_{1,\H,2}  \big(  \xo,\too;\xf,\tf  \big) = \ps_{11} \Vr \big(  (\xo,\too,\xud),(\xud,\xf,\tf)  \big) .
\end{equation*}  
Moreover, if $(\xo,\too;\xf,\tf) \in \Omega_1 \times \R^+_* \times \H \times \R^+_*$ then 
\begin{equation*} %\label{ugderVV5}
\nabla^\H_{\xf} \: \V_{1,\H,2}  \big(  \xo,\too;\xf,\tf  \big) = \ps_{9}^\H \Vr \big(  (\xo,\too,\xud),(\xud,\xf,\tf)  \big) 
\end{equation*}
and if $(\xo,\too;\xf,\tf) \in \H \times \R^+_* \times \Omega_2 \times \R^+_*$ then 
\begin{equation*}% \label{ugderVV6}
\nabla^\H_{\xo} \: \V_{1,\H,2}  \big(  \xo,\too;\xf,\tf  \big) = \ps_{3}^\H \Vr \big(  (\xo,\too,\xud),(\xud,\xf,\tf)  \big) .
\end{equation*}
%Note that when functions $\V_{1,\H,2}$ and $\Vr$ are differentiable (i.e., almost everywhere) 
%these equalities have to be intended in a sense of real derivatives, if not in the sense of sub-differential. 
%{\color{blue} On peut peut etre dire cela mieux?}
\end{lemma}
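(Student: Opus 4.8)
The plan is to recognize Lemma~\ref{ugderVV} as an \emph{envelope identity} (of Danskin type) for the parametric minimization \eqref{tesiVVnot}. By Proposition~\ref{infvalues}, rewritten as \eqref{tesiVVnot}, the regional value function is obtained from the duplicated one by minimizing over the junction variables $\xud=(x_1,t_1,x_2,t_2)$ subject to $\Psi(x_1)=\Psi(x_2)=0$ and $\too<t_1<t_2<\tf$:
\begin{equation*}
\V_{1,\H,2}(\xo,\too;\xf,\tf)=\min_{\xud}\ \Vr\big((\xo,\too,\xud);(\xud,\xf,\tf)\big).
\end{equation*}
The key observation is purely a matter of bookkeeping: on the right-hand side the external data $(\xo,\too)$ occupy only the coordinates $1,2$ of the first argument of $\Vr$, while $(\xf,\tf)$ occupy only the coordinates $11,12$ of the second argument; the junction variables $\xud$ occupy the coordinates $3,4,5,6$ and $7,8,9,10$. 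This is exactly what forces the index pattern $1,2,11,12$ (and, in the tangential cases, $3,9$) appearing in the statement.

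First I would fix a base point $(\xo,\too;\xf,\tf)$ together with its optimal junction point $\xud^\star=(x_1,t_1,x_2,t_2)$, which under assumption (Hu) is unique, a unique optimal trajectory producing a unique such tuple. Freezing $\xud^\star$, I introduce the auxiliary function
\begin{equation*}
g(\xo,\too;\xf,\tf):=\Vr\big((\xo,\too,\xud^\star);(\xud^\star,\xf,\tf)\big),
\end{equation*}
so that $\V_{1,\H,2}\le g$ everywhere with equality at the base point. The argument $\big((\xo,\too,\xud^\star);(\xud^\star,\xf,\tf)\big)$ is an endpoint of the optimal duplicated trajectory, hence Proposition~\ref{TEOLINKBASE} together with (Hu) (the sensitivity relations \eqref{linkbaseT0}--\eqref{linkbaseT1}) ensures that $\Vr$ is differentiable there. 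Consequently $g$ is differentiable, and its partial derivatives are precisely $\ps_1\Vr,\ \pt_2\Vr,\ \ps_{11}\Vr,\ \pt_{12}\Vr$ evaluated at that point.

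It then remains to turn the one-sided bound $\V_{1,\H,2}\le g$, tight at the base point, into an equality of gradients. Since the base point is interior ($\Omega_1,\Omega_2$ open and the times positive), $\nabla g$ belongs to the superdifferential of $\V_{1,\H,2}$; uniqueness of the inner minimizer $\xud^\star$ then yields the standard Danskin conclusion --- equivalently, the chain-rule cross terms annihilate because the map $\theta\mapsto\xud^\star(\theta)$ is tangent to the constraint manifold $\{\Psi(x_1)=\Psi(x_2)=0\}$ along which $\xud^\star$ is first-order optimal --- so that $\V_{1,\H,2}$ is differentiable with $\nabla\V_{1,\H,2}=\nabla g$. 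This produces \eqref{ugderVV1}--\eqref{ugderVV3} and the two companion time identities simultaneously. For the last two statements, where the endpoint lies on $\H$, I would run the same frozen-minimizer comparison but vary $\xf$ (respectively $\xo$) \emph{along} $\H$ and differentiate within the manifold; there the $\Omega_2$-arc (respectively $\Omega_1$-arc) degenerates and $\xf$ (respectively $\xo$) merges with the terminal (respectively initial) node of the $\H$-arc, i.e.\ with coordinate $9$ (respectively $3$), so the surviving object is the tangential gradient $\ps_9^\H\Vr$ (respectively $\ps_3^\H\Vr$).

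I expect the main obstacle to be the careful execution of this envelope step rather than differentiability, the latter being already supplied by Proposition~\ref{TEOLINKBASE} under (Hu). Two points require genuine care: first, one must check that the minimum in \eqref{tesiVVnot} is attained at a configuration interior in the time variables, so that the ordering constraints $\too<t_1<t_2<\tf$ are inactive and no inequality multipliers contaminate the identities; and second, in the tangential cases one must verify the correct reinterpretation of the duplicated coordinates, ensuring that it is the tangential component of $\nabla\Vr$ --- and not its full $\R^N$ gradient --- that matches $\nabla^\H\V_{1,\H,2}$ when the endpoint is constrained to $\H$.
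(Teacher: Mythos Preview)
Your approach is correct and is, at bottom, the same envelope argument the paper uses: write $\V_{1,\H,2}(\lambda)=\Vr\big((\lambda^0,\xud(\lambda));(\xud(\lambda),\lambda^f)\big)$, differentiate by the chain rule, and observe that all cross terms in $\partial\xud/\partial\lambda$ cancel because $\xud$ is a constrained minimizer. The one genuine difference is \emph{where the first-order conditions at the minimum come from}. You invoke Danskin/Lagrange directly: stationarity of $\xud\mapsto\Vr((\lambda^0,\xud);(\xud,\lambda^f))$ under $\Psi(x_1)=\Psi(x_2)=0$ forces $(\partial_3+\partial_7)\Vr$ and $(\partial_5+\partial_9)\Vr$ to be normal to $\H$ and $(\partial_4+\partial_8)\Vr=(\partial_6+\partial_{10})\Vr=0$. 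The paper instead \emph{derives} these same identities by combining the PMP transversality conditions \eqref{Jumpt1}--\eqref{Jumps2} of Lemma~\ref{PMPripa} with the sensitivity relations \eqref{linkbaseT01} of Proposition~\ref{TEOLINKBASE}, obtaining the system \eqref{CN}; then it performs the explicit chain-rule computation and uses \eqref{CN} together with $\langle\nabla\Psi(x_i),\partial x_i/\partial\lambda\rangle=0$ to kill the cross terms. Your route is shorter and more conceptual; the paper's route has the advantage that the Lagrange multipliers it produces are \emph{identified} with the $\nu_1,\nu_2$ of the PMP, which is exactly what is needed later to match $Q$ with $P$ in the proof of Theorem~\ref{mainthm}. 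Note also that the paper tacitly assumes $\xud(\lambda)$ is differentiable in $\lambda$ (it writes $\partial x_i/\partial\too$ etc.), whereas your Danskin formulation avoids this but trades it for the ``uniqueness $\Rightarrow$ differentiability'' step, which you should justify (e.g.\ via semiconcavity of $\V_{1,\H,2}$ and singleton superdifferential) rather than merely assert.
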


Before proving this lemma, let us conclude the proof. By \eqref{ugderVV3} in Lemma  \ref{ugderVV} we have 
\begin{equation*} 
\nabla_{\xo} \: \V_{1,\H,2}  \big( X_1(t), t ;\xf,\tf \big) = \ps_{1} \Vr \big(  \vsz(\tau),\tau ; \Zu, \Tu   \big)  \quad \forall \tau \in (\To,\Tu) \quad \forall t \in (\too,t_1), 
\end{equation*}  
%moreover by \eqref{linkbase1-7} 
%$$
%-\ps_{1} \Vr \big(  \vsz(\tau),\tau ; \Zu, \Tu   \big)=\vayu (\tau)  \quad \forall \tau \in (\To,\Tu)
%$$
therefore, by the continuity of the adjoint vector and  \eqref{linkbase1-7},  we have 
\begin{multline*}
 P_1( t_1^-)=\lim_{t \ds t_1^-}  P_1(t)= \lim_{\tau \ds \Tu} \vayu (\tau) \\
 =\lim_{\tau \ds \Tu} -\ps_{1} \Vr \big(  \vsz(\tau),\tau ; \Zu, \Tu   \big)=
 \lim_{t \ds t_1^- } - \nabla_{\xo} \: \V_{1,\H,2}  \big( X_1(t), t ;\xf,\tf \big) ,
\end{multline*}
that is, $P_1(t_1^-)=Q_1(t_1^-) $.  \\
In a  similar way, by   Lemma  \ref{ugderVV} below,  equalities \eqref{linkbase3-9}-\eqref{linkbase5-11}
and the continuity of the adjoint vector, we obtain $P_2(t_2^+)=Q_2(t_2^+)$, $P_\H(t_2^-)=Q_\H(t_2^-)$. This  concludes the proof of Theorem \ref{mainthm}.

\begin{proof}[Proof of Lemma \ref{ugderVV}.]  
Given $\xof=(\xo,\too;\xf,\tf)$,  let  $\xud=(x_1,t_1,x_2,t_2)=(x_1(\xof), t_1(\xof), x_2(\xof), t_2(\xof))$ be a minimum point in \eqref{tesiVVnot}. We can then  write
$$
\V_{1,\H,2} (\xo,\too;\xf,\tf)=\Vr\Big( \big( \xo,\too,x_1(\xof), t_1(\xof), x_2(\xof), t_2(\xof) \big) \: ;  \big(x_1(\xof), t_1(\xof), x_2(\xof), t_2(\xof),\xf, \tf \big)\Big) .
$$
We first remark that putting together \eqref{Jumpt1}-\eqref{Jumps2} in  Theorem \ref{PMPripa} and \eqref{linkbaseT01} in Remark 
\ref{remHJtotu} we have 
\begin{equation} \label{CN}
\begin{array}{rcl}
%\label{CN1}
(\pt_4 \Vr + \pt_8 \Vr)\big( (\xofo,\xud(\lambda));(\xud(\lambda),\xoff) \big)& = &0  \\
%\label{CN2}
(\pt_6 \Vr + \pt_{10} \Vr)\big( (\xofo,\xud(\lambda));(\xud(\lambda),\xoff) \big)& = &0  \\
%\label{CN3}
(\ps_3 \Vr + \ps_{7} \Vr) \big( (\xofo,\xud(\lambda));(\xud(\lambda),\xoff) \big)&= &\nu_1 \nabla \Psi (x_1)  \\
%\label{CN4}
(\ps_5 \Vr + \ps_{9} \Vr) \big( (\xofo,\xud(\lambda));(\xud(\lambda),\xoff) \big)& = &\nu_2 \nabla \Psi (x_2) \\
%\label{CN5}
\Psi(x_1(\lambda))&= &0 \\
%\label{CN6}
\Psi(x_2(\lambda))&=& 0 .
%\end{align*}
\end{array}
\end{equation}
%Note that these equalities are true only if $\Vr$ is differentiable, (i.e., almost everywhere  by Remark  \ref{remHJtotu}), when this is not the case 
%we can still use inequalities  \eqref{inclusionT0} \eqref{inclusionTu} instead of  \eqref{linkbaseT01} and obtain similar relations on sub and super %differentials. 
We will only detail the proof of \eqref{ugderVV1}   and  \eqref{ugderVV3}, the other proofs being  similar.  
If we  set  $\bar{\chi}=\big( (\xofo,\xud(\xof)),(\xud(\xof),\xoff)\big)$ by simple computations we get 
\begin{multline*}
\dis \frac{\partial \V }{\partial \too}  (\xof) = \dis  
\pt_2 \Vr(\bar{\chi})+ \dis
 \langle \ps_3 \Vr(\bar{\chi}),  \frac{\partial x_1}{ \partial \too}(\xof) \rangle+ 
 \pt_4 \Vr(\bar{\chi})\:\frac{\partial t_1}{ \partial \too}(\xof)+
  \langle \ps_5 \Vr(\bar{\chi}),  \frac{\partial x_2}{ \partial \too}(\xof) \rangle 
  +\pt_6 \Vr(\bar{\chi})\:\frac{\partial t_2}{ \partial \too}(\xof)\\
  +   \langle \ps_7 \Vr(\bar{\chi}),  \frac{\partial x_1}{ \partial \too}(\xof) \rangle+
   \pt_8 \Vr(\bar{\chi})\:\frac{\partial t_1}{ \partial \too}(\xof)+
  \langle \ps_9 \Vr(\bar{\chi}),  \frac{\partial x_2}{ \partial \too}(\xof) \rangle+
  \pt_{10} \Vr(\bar{\chi})\:\frac{\partial t_2}{ \partial \too}(\xof).
 \end{multline*}
Therefore, thanks to \eqref{CN}, we have
$$
\dis \frac{\partial \V }{\partial \too}  (\xof) =\pt_2 \Vr(\bar{\chi})+ \langle \mu_1 \nabla \Psi(x_1(\xof)),  \frac{\partial x_1}{ \partial \too}(\xof) \rangle+
  \langle \mu_2 \nabla \Psi(x_2(\xof)),  \frac{\partial x_2}{ \partial \too}(\xof) \rangle .
$$
Moreover, since  differentiating conditions $ \Psi(x_1(\xof))=0,\: \Psi(x_2(\xof))=0$ in \eqref{CN}  we obtain
$$
\langle  \nabla \Psi(x_1(\xof)),  \frac{\partial x_1}{ \partial \too}(\xof) \rangle=0  \quad \mbox{ and } 
\langle  \nabla \Psi(x_2(\xof)),  \frac{\partial x_2}{ \partial \too}(\xof) \rangle=0
$$ 
and we conclude that $\displaystyle \frac{\partial \V }{\partial \too}  (\xof) =\pt_2 \Vr(\bar{\chi})$.

%Similarly: 
%$$
%\begin{array}{rrl}
%\dis \frac{\partial \V }{\partial \tf}  (\xof) = &\dis  
%\pt_{12} \Vr(\bar{\chi})+& \dis
% \langle \ps_3 \Vr(\bar{\chi}),  \frac{\partial x_1}{ \partial \tf}(\xof) \rangle+ 
% \pt_4 \Vr(\bar{\chi})\:\frac{\partial t_1}{ \partial \tf}(\xof)+
 % \langle \ps_5 \Vr(\bar{\chi}),  \frac{\partial x_2}{ \partial \tf}(\xof) \rangle+\pt_6 \Vr(\bar{\chi})\:\frac{\partial t_2}{ \partial \tf}(\xof)
 % \\
 % \\
 % &\dis   +& \dis
  % \langle \ps_7 \Vr(\bar{\chi}),  \frac{\partial x_1}{ \partial \tf}(\xof) \rangle+
  % \pt_8 \Vr(\bar{\chi})\:\frac{\partial t_1}{ \partial \tf}(\xof)+
 % \langle \ps_9 \Vr(\bar{\chi}),  \frac{\partial x_2}{ \partial \tf}(\xof) \rangle+
 % \pt_{10} \Vr(\bar{\chi})\:\frac{\partial t_2}{ \partial \tf}(\xof).
 %\end{array}
%$$
%thanks to \eqref{CN1} -\eqref{CN2}-\eqref{CN3}-\eqref{CN4}, we have 
%$$
%\dis \frac{\partial \V }{\partial \tf}  (\xof) =\pt_{12} \Vr(\bar{\chi})+ \langle \mu_1 \nabla c(x_1(\xof)),  \frac{\partial x_1}{ \partial \tf}(\xof) \rangle+
 % \langle \mu_2 \nabla c(x_2(\xof)),  \frac{\partial x_2}{ \partial \tf}(\xof) \rangle
%$$
 % by differentiating conditions \eqref{CN5}, \eqref{CN6}, (i.e., $c(x_1(\xof))=0,\: c(x_2(\xof))=0$) we have 
%$$
%\langle  \nabla c(x_1(\xof)),  \frac{\partial x_1}{ \partial \tf}(\xof) \rangle=0  \quad \mbox{ and } 
%\langle  \nabla c(x_2(\xof)),  \frac{\partial x_2}{ \partial \tf}(\xof) \rangle=0
%$$ 
%we can conclude that $\frac{\partial \V }{\partial \tf}  (\xof) =\pt_{12} \Vr(\bar{\chi})$ that is \eqref{ugderVV2}.\\
Similarly,
\begin{multline*}
\dis \frac{\partial \V }{\partial \xo}  (\xof) = \dis  
\ps_1 \Vr(\bar{\chi})+ \dis
 \langle \ps_3 \Vr(\bar{\chi}),  \frac{\partial x_1}{ \partial \xo}(\xof) \rangle+ 
 \pt_4 \Vr(\bar{\chi})\:\frac{\partial t_1}{ \partial \xo}(\xof)+
  \langle \ps_5 \Vr(\bar{\chi}),  \frac{\partial x_2}{ \partial \xo}(\xof) \rangle \\
  +\pt_6 \Vr(\bar{\chi})\:\frac{\partial t_2}{ \partial \xo}(\xof) 
+    \langle \ps_7 \Vr(\bar{\chi}),  \frac{\partial x_1}{ \partial \xo}(\xof) \rangle+
   \pt_8 \Vr(\bar{\chi})\:\frac{\partial t_1}{ \partial \xo}(\xof) \\
   +
  \langle \ps_9 \Vr(\bar{\chi}),  \frac{\partial x_2}{ \partial \xo}(\xof) \rangle+
  \pt_{10} \Vr(\bar{\chi})\:\frac{\partial t_2}{ \partial \xo}(\xof).
 \end{multline*}
Thanks to \eqref{CN}, this gives
$$
\dis \frac{\partial \V }{\partial \xo}  (\xof) =\ps_{1} \Vr(\bar{\chi})+ \langle \nu_1 \nabla \Psi(x_1(\xof)),  \frac{\partial x_1}{ \partial \xo}(\xof) \rangle+
  \langle \nu_2 \nabla \Psi(x_2(\xof)),  \frac{\partial x_2}{ \partial \xo}(\xof) \rangle
$$
  by differentiating conditions $ \Psi(x_1(\xof))=0,\: \Psi(x_2(\xof))=0$ in \eqref{CN} we have 
$$
\langle  \nabla \Psi(x_1(\xof)),  \frac{\partial x_1}{ \partial \xo}(\xof) \rangle=0  \quad \mbox{ and } 
\langle  \nabla \Psi(x_2(\xof)),  \frac{\partial x_2}{ \partial \xo}(\xof) \rangle=0
$$ 
and hence $\dis \frac{\partial \V }{\partial \xo}  (\xof) = \ps_{1}\Vr(\bar{\chi})$.
%Finally 
%$$
%\begin{array}{rrl}
%\dis \frac{\partial \V }{\partial \xf}  (\xof) = &\dis  
%\ps_{11} \Vr(\bar{\chi})+& \dis
% \langle \ps_3 \Vr(\bar{\chi}),  \frac{\partial x_1}{ \partial \xf}(\xof) \rangle+ 
 %\pt_4 \Vr(\bar{\chi})\:\frac{\partial t_1}{ \partial \xf}(\xof)+
 % \langle \ps_5 \Vr(\bar{\chi}),  \frac{\partial x_2}{ \partial \xf}(\xof) \rangle+\pt_6 \Vr(\bar{\chi})\:\frac{\partial t_2}{ \partial \xf}(\xof)
 % \\
 % \\
 % &\dis   +& \dis
  % \langle \ps_7 \Vr(\bar{\chi}),  \frac{\partial x_1}{ \partial \xf}(\xof) \rangle+
 %  \pt_8 \Vr(\bar{\chi})\:\frac{\partial t_1}{ \partial \xf}(\xof)+
 % \langle \ps_9 \Vr(\bar{\chi}),  \frac{\partial x_2}{ \partial \xf}(\xof) \rangle+
 % \pt_{10} \Vr(\bar{\chi})\:\frac{\partial t_2}{ \partial \xf}(\xof).
% \end{array}
%$$
%thanks to \eqref{CN1} -\eqref{CN2}-\eqref{CN3}-\eqref{CN4}, this reads 
%$$
%\dis \frac{\partial \V }{\partial \xf}  (\xof) =\ps_{1} \Vr(\bar{\chi})+ \langle \mu_1 \nabla c(x_1(\xof)),  \frac{\partial x_1}{ \partial \xf}(\xof) \rangle+
%  \langle \mu_2 \nabla c(x_2(\xof)),  \frac{\partial x_2}{ \partial \xf}(\xof) \rangle
%$$
 % by differentiating conditions \eqref{CN5}, \eqref{CN6}, (i.e., $c(x_1(\xof))=0,\: c(x_2(\xof))=0$) we have 
%$$
%\langle  \nabla c(x_1(\xof)),  \frac{\partial x_1}{ \partial \xf}(\xof) \rangle=0  \quad \mbox{ and } 
%\langle  \nabla c(x_2(\xof)),  \frac{\partial x_2}{ \partial \xf}(\xof) \rangle=0
%$$ 
%therefore we can conclude that $\frac{\partial \V }{\partial \xo}  (\xof) = \ps_{11}\Vr(\bar{\chi})$ that is \eqref{ugderVV4}.
 \end{proof}

%%%%%%%%%%%%%%%%%%%%%%%%%%%%%%%%%%%%%%%%%%%%%%%%%%%%%%%%%%%%%%%%
\thebibliography{99}  

%\bibitem{ACCT} Y. Achdou, F. Camilli, A. Cutr\` i and N. Tchou, {\it Hamilton--Jacobi equations
%constrained on networks}. Nonlinear Differ. Eq. Appl. 20, (2013), 413--445.

\bibitem{AgSa} A. Agrachev, Y. Sachkov, {\it Control Theory from the Geometric Viewpoint}, Encyclopaedia
Math. Sci. 87, Control Theory and Optimization, II, Springer-Verlag, Berlin, 2004.

\bibitem{Ag}  A. Agrachev {\it On regularity properties of extremal controls},
J. Dynam. Control Systems 1 (3), (1995), 319--324. 

\bibitem{AAS} A. D. Ames, A. Abate, S. Sastry, {\it Sufficient conditions for the existence of Zeno behavior},
Decision and Control, 2005 and 2005 European Control Conference. CDC-ECC'05,  (2007), 696--701. 

\bibitem{AubinFrankowska}  J.P. Aubin, H. Frankowska, {\it Set-valued analysis}, Systems
\& Control : Foundations \& Applications, 2,  1990.

\bibitem{BCD} M. Bardi, I. Capuzzo Dolcetta, {\it Optimal control and viscosity
  solutions of Hamilton-Jacobi- Bellman equations}, Systems \& Control:
  Foundations \& Applications, Birkhauser Boston Inc., Boston, MA, 1997.

\bibitem{Ba} G. Barles,  {\it Solutions de viscosit\'e des  \'equations de
  Hamilton-Jacobi}, Springer-Verlag, Paris, 1994.

\bibitem{BBC}  G. Barles, A. Briani, E. Chasseigne,  {\it A Bellman approach for two-domains optimal control problems in $\R^N$}. ESAIM: Control, Optimisation and Calculus of Variations  19.  (3)  (2013), 710--739.

\bibitem{BBC2}  G. Barles, A. Briani, E. Chasseigne,  {\it A Bellman approach for regional optimal control problems in $\R^N$}, SIAM Journal on Control and Optimization, Society for Industrial and Applied Mathematics,  52 (3),  (2014), 1712--1744. 

\bibitem{BC} G. Barles, E. Chasseigne. {\it (Almost) Everything You Always Wanted to Know About Deterministic Control Problems in Stratified Domains}. Networks and Heterogeneous Media (NHM),  10 (4), (2015), 809--836.

\bibitem{BBM} M. S. Branicky, V. S. Borkar, S. K. Mitter,
\textit{A unified framework for hybrid control: model and optimal control theory},
IEEE Trans. Autom. Control  43 (1), (1998),  31--45.

\bibitem{BrYu} A. Bressan, Y. Hong, {\it Optimal control problems on
  stratified domains}, Netw. Heterog. Media 2 (2),(2007),  313--331
  (electronic) and Errata corrige: "Optimal control problems on stratified domains''. Netw. Heterog. Media 8 (2013), no. 2, 625.
  
 \bibitem{CannarsaSinestrari}
P. Cannarsa, C. Sinestrari,
\textit{Semiconcave functions, Hamilton-Jacobi equations, and optimal control},
Progress in Nonlinear Differential Equations and their Applications, 58, Birkh\"auser Boston, Inc., Boston, MA, 2004. 

\bibitem{CGPT} M. Caponigro, R. Ghezzi,  B. Piccoli, E. Tr\' elat, {\it Regularization  of chattering phenomena via bounded variation controls}, preprint Hal (2016). 

\bibitem{Chitour_Jean_TrelatJDG}
Y.~Chitour, F.~Jean, E.~Tr\'elat,
\emph{Genericity results for singular curves},
J.\ Differential Geom., 73, (1), (2006), 45--73.

\bibitem{Chitour_Jean_Trelat}
Y.~Chitour, F.~Jean, E.~Tr\'elat,
\emph{Singular trajectories of control-affine systems},
SIAM J.\ Control Optim., 47 (2), (2008), 1078--1095.

%\bibitem{Clarke}
%F. H. Clarke, 
%\textit{Necessary conditions in dynamic optimization},
%Mem. Amer. Math. Soc. 173  (816), (2005).
 
\bibitem{ClarkeVinter}
F. H. Clarke, R. Vinter,
\textit{The relationship between the maximum principle and dynamic programming},
SIAM Journal on Control and Optimization,  25 (5),  (1987),  1291--1311.

\bibitem{ClarkeVinter1}
F. H. Clarke, R. Vinter,
\textit{Optimal multiprocesses},
SIAM Journal on Control and Optimization,  27 (5), (1989), 1072--1091.

\bibitem{ClarkeVinter2}
F. H. Clarke, R. Vinter,
\textit{Application of optimal multiprocesses},
SIAM Journal on Control and Optimization, {\bf 27} (1989), no. 5, 1047--1071.
  
 \bibitem{Dmitruk} A. V. Dmitruk, A. M. Kaganovich, {\it The hybrid maximum principle is a consequence of
Pontryagin maximum principle}, Systems Control Lett., 57, (2008), 964--970.

\bibitem{GP} M. Garavello, B. Piccoli, {\it Hybrid necessary principle}, SIAM J. Control Optim., 43
(2005), 1867--1887.

%\bibitem{Fi} P.A. Fishwick, {\it Handbook of Dynamic System Modeling},  Chapman \& Hall/CRC Computer and Information Science, Boca Raton, Florida, USA, 2007.

 \bibitem{HT} H. Haberkorn, E. Tr\'elat {\it Convergence result for smooth regularizations of hybrid nonlinear optimal control problems}, 
 SIAM J. Control and Optim. , 49 (4), (2011), 1498--1522. 
  
\bibitem{HLF} M. Heymann, F. Lin, G. Meyer, S. Resmerita, {\it Stefan Analysis of Zeno behaviors in a class of hybrid systems}. IEEE Trans. Automat. Control 50 (3), (2005), 376--383. 
  
 \bibitem{HZ} C. Hermosilla, H. Zidani, {\it Infinite horizon problems on stratifiable state-constraints sets}, Journal of Differential Equations, Elsevier,  258 (4), (2015),1430-1460. 

%\bibitem{IM} C. Imbert, R. Monneau. {\it Flux-limited solutions for quasi-convex Hamilton-Jacobi equations on networks}. 
%preprint, hal-00832545v5 , 2016.

%\bibitem{IM2} C. Imbert, R. Monneau, {\it The vertex test function for Hamilton-Jacobi
%equations on networks}. Preprint arXiv:1306.2428 (2013).

\bibitem{IMZ} C. Imbert, R. Monneau, H. Zidani. {\it A Hamilton-Jacobi approach to junction problems and application to traffic flows.} ESAIM: Control, Optimisation and Calculus of Variations, EDP Sciences, 19 (1), (2013), 129-166. 

\bibitem{JELS}  K.H. Johansson, M. Egerstedt, J. Lygeros, S. Sastry, 
{\it On the regularization of Zeno hybrid automata} 
Systems Control Lett. 38 (3), (1999),  141--150. 

%\bibitem{LA} A. Lamperski, A. D. Ames,  {\it Lyapunov theory for Zeno stability}, 
%IEEE Trans. Automat. Control 58 (1), (2013),  100--112. 

\bibitem{Ou} S. Oudet, {\it Hamilton-Jacobi equations for optimal control on heterogeneous
structures with geometric singularity},  Preprint hal-01093112 (2014).

\bibitem{Po}
L. Pontryagin, V. Boltyanskii,  R. Gramkrelidze, E. Mischenko,
\textit{The mathematical theory of optimal processes},
Wiley Interscience, 1962.

\bibitem{RIK}
P.~Riedinger, C.~Iung, F.~Kratz,
\textit{An optimal control approach for hybrid systems},
European Journal of Control,  9 (5), (2003),  449--458.

\bibitem{RSZ} Z. Rao, A. Siconolfi and H. Zidani, {\it Transmission conditions on
interfaces for Hamilton-Jacobi-bellman equations}
 J. Differential Equations 257 (11), (2014),  3978?4014. 

\bibitem{RZ} Z. Rao, H. Zidani, {\it Hamilton-Jacobi-Bellman  equations on
multi-domains}, Control and Optimization with PDE Constraints. Springer
(2013), 93--116.

\bibitem{RT1}
L.~Rifford, E.~Tr\'elat,
\textit{Morse-Sard type results in sub-Riemannian geometry},
Math.\ Ann.\,  332 (1), (2005),  145--159.

\bibitem{RT2}
L. Rifford, E. Tr\'elat,
\textit{On the stabilization problem for nonholonomic distributions},
J.\ Eur.\ Math.\ Soc.,  11 (2), (2009), 223--255.

\bibitem{SC}
M. S. Shaikh, P. E. Caines,
\textit{On the hybrid optimal control problem: theory and algorithms},
IEEE Trans. Automat. Control,  52 (9), (2007),  1587--1603.

%\bibitem{ScCa} D. Schieborn and F. Camilli, {\it Viscosity solutions of Eikonal equations on
%topological networks}. Calc. Var. Partial Differ. Eq. 46, (2013), 671--686.

\bibitem{Stefani}
G. Stefani,
\emph{Regularity properties of the minimum-time map},
Nonlinear synthesis (Sopron, 1989), 270--282,
Progr. Systems Control Theory, 9, Birkhuser Boston, Boston, MA, 1991. 

\bibitem{Sussmann} H.J. Sussmann,
\textit{A nonsmooth hybrid maximum principle},
Stability and stabilization of nonlinear systems (Ghent, 1999), 325--354,
Lecture Notes in Control and Inform. Sci. {\bf 246}, Springer, London, 1999. 

\bibitem{Tlibro} E. Tr\'elat, \textit{Contr\^ole optimal : th\'eorie \& applications}.
Vuibert, Collection "Math\'ematiques Concr\`etes", 2005.

\bibitem{T2006}
E. Tr\'elat,
\textit{Global subanalytic solutions of Hamilton-Jacobi type equations},
Ann.\ Inst.\ H.\ Poincar\'e Anal.\ Non Lin\'eaire,  23 (3),  (2006),  363--387.

\bibitem{TJOTA} E. Tr\'elat,  {\it Optimal control and applications to aerospace: some results and challenges}. J. Optim. Theory Appl. 154 (3), (2012),  713--758.

\bibitem{ZJLS} J. Zhang, K.H. Johansson, J. Lygeros, S. Sastry, {\it Zeno hybrid systems}. 
Internat. J. Robust Nonlinear Control 11 (5),  (2001), 435--451.

\end{document}